\newtheorem{theo}{Theorem}[section]
\newtheorem{lemm}[theo]{Lemma}
\newtheorem{defi}[theo]{Definition}
\newtheorem{prop}[theo]{Proposition}
\newtheorem{rema}[theo]{Remark}
\def\la{\langle}
\def\ra{\rangle}
\newcommand\NN{{\mathbb N}}
\newcommand\RR{{{\mathbb R}}}
\newcommand\ZZ{{\mathbb Z}}
\def\SS {\mathbb{S}}
\numberwithin{equation}{section}
\begin{document}
\title[Gevrey regularity for non-cutoff Boltzmann equation ]
{Gevrey regularity of spatially homogeneous \\ Boltzmann equation without cutoff}

\author{Teng-Fei Zhang }
\address{Teng-Fei Zhang
\newline\indent
Department of Mathematics, Sun Yat-sen University
\newline\indent
510275, Guangzhou, P. R. China}
\email{fgeyirui@163.com}

\author{Zhaoyang Yin }
\address{Zhaoyang Yin
\newline\indent
Department of Mathematics, Sun Yat-sen University
\newline\indent
510275, Guangzhou, P. R. China}
\email{mcsyzy@mail.sysu.com.cn}

\subjclass[2000]{35A05, 35B65, 35D10, 35H20, 76P05, 84C40}

\keywords{Gevrey regularity, Boltzmann equation, Non-cutoff.}

\date{12-December-2011}

\begin{abstract}
In this paper, we study the Gevrey regularity of spatially homogeneous Boltzmann equation without angular cutoff. We prove the propagation of Gevrey regularity for $C^\infty$ solutions with the Maxwellian decay to the Cauchy problem of spatially homogeneous Boltzmann equation. The idea we use here is based on the framework of Morimoto's recent paper
(See Morimoto: J. Pseudo-Differ. Oper. Appl. (2010) 1: 139-159, DOI:10.1007/s11868-010-0008-z), but we extend the range of the index $\gamma$ satisfying $\gamma + 2s \in (-1,1)$, $s\in (0,1/2)$ and in this case we consider the kinetic factor in the form of $\Phi(v)=|v|^\gamma$ instead of $\la v \ra ^\gamma$ as Morimoto did before.
\end{abstract}

\maketitle

\section{Introduction}\label{Section1}
We first introduce the Boltzmann equation for the spatially inhomogeneous case:
$$
f_t(t,x,v)+v\cdot \nabla_xf(t,x,v)=Q(f,f)(v),\quad t\in \RR^+, \,\,  x,v \in \RR^3 \,,
$$
where $f= f(t,x,v)$ is the density distribution function
of particles located around position $x\in \RR^3$ with velocity $v\in \RR^3$ at time $t \geq 0$.
The right-hand side of the above equation is the so called Boltzmann bilinear collision operator acting only on the velocity variable $v$:
\[
Q(g, f)=\int_{\RR^3}\int_{\mathbb S^{2}}B\left({v-v_*},\sigma
\right)
 \left\{g'_* f'-g_*f\right\}d\sigma dv_*\,.
\]

    Hereafter we use the notation $f=f(t,x,v)$, $f_*=f(t,x,v_*) $, $f'=f(t,x,v') $, $f'_*=f(t,x,v'_*) $,
and for convenience we choose the $\sigma$-representation to describe the relations between the post- and pre-collisional velocities, that is, for $\sigma \in \mathbb S^2$,
$$
v'=\frac{v+v_*}{2}+\frac{|v-v_*|}{2}\sigma,\,\,\, v'_*
=\frac{v+v_*}{2}-\frac{|v-v_*|}{2}\sigma.\,
$$
We note that the collision process satisfies the conservation of momentum and kinetic energy, that is,
$$
v+v_*=v'+v'_* \,,\qquad  |v|^2+|v_*|^2=|v'|^2+|v'_*|^2\,.
$$

    The collision cross section $B(z, \sigma)$ is a given non-negative function, and depends only on
the interaction law between particles. By a mathematical language, that means $B(z, \sigma)$ depends only on the relative velocity $|z|=|v-v_*|$ and the deviation angle $\theta$ through the scalar product
$\cos \theta=\frac{z}{|z|}\,\cdot\, \sigma$.

    In what follows, we consider the case in which the cross section $B$ can be assumed to be of the form:
$$
B(v-v_*, \cos \theta)=\Phi (|v-v_*|) b(\cos \theta),\,\,\,\,\,
\cos \theta=\frac{v-v_*}{|v-v_*|} \, \cdot\,\sigma\, , \,\,\,
0\leq\theta\leq\frac{\pi}{2},
$$
where the kinetic factor $\Phi$ is given by
$$
\Phi (|v-v_*|) = |v-v_*|^{\gamma},
$$
and the angular part $b$ with singularity satifies,
$$
\sin \theta b(\cos \theta) \sim K\theta^{-1-2s}, \ \ \mbox{as} \ \ \theta\rightarrow 0+,
$$
for some positive constant $K$ and $0< s <1$.

    We remark here if the inter-molecule potential satisfies the inverse-power law potential
$U(\rho) = \rho ^{-(p-1)}$, $ p>2 $, it holds that $ \gamma = \frac{p-5}{p-1} $,
$ s=\frac{1}{p-1} $. Generally, the case $\gamma >0$, $\gamma =0$, and $\gamma <0$ correspond to so called hard, Maxwellian, and soft potentials respectively. And the case $0<s<1/2$, $1/2 \leq s<1$ correspond to so called mild singularity and strong singularity respectively.

    In the paper we consider the Cauchy problem for the spatially homogeneous Boltzmann equation without
cutoff, with a $T >0$,
\begin{equation}\label{BE}
\left\{
\begin{array}{l}\displaystyle
f_t(t,v)=Q(f,f)(v),\quad t\in (0,T], \,  v \in \RR^n,
\\
f(0,v)=f_0(v), \end{array}
\right.
\end{equation}
where ``spatially homogeneous" means that $f$ depends only on $t$ and $v$.

    Let us give a brief review about the study for Boltzmann equation. Since Grad introduced a cutoff
assumption for cross section $B$ due to the difficulties coming from the singularity of the angular part $b$, there have been a lot of results. And as for the non-cutoff theory, great progress has been made in recent years, for that we refer the reader to Alexandre's review paper \cite{Alex-review}. And when considering non-cutoff Boltzmann equation in Gevrey spaces (see Definition \ref{Def2} below), Ukai in \cite{ukai} shows that the Cauchy problem for the Boltzmann equation admits a unique local solution in Gevrey classes for both spatially homogeneous and inhomogeneous cases, under the assumption on the cross section:
\begin{align*}
&\big| B(|z|,\cos \theta) \big| \leq  K(1+|z|^{-\gamma'}\!\!+|z|^\gamma) \theta^{-n+1-2s}, \quad n  \textrm{ is dimensionality},\\
&(0\leq \gamma' < n,\,\, 0\leq \gamma <2,\,\, 0\leq s<1/2, \,\,\gamma +6s<2 ).
\end{align*}
By introducing the norm of Gevrey space
$$
\|f\|^{U}_{\delta,\rho,\nu} = \sum_{\alpha} \frac{\rho^{|\alpha|}}{\{\alpha!\}^\nu}
\|e^{\delta \la v\ra^2} \partial_v^{\alpha} f \|_{L^\infty(\RR^n_v)},
$$
it was proved that in the spatially homogeneous case, for instance, under some assumptions for $\nu$ and the initial datum $f_0(v)$, the Cauchy problem (\ref{BE}) has a unique solution $f(t,v)$ for $t\in (0,T]$.

    We then turn to the work of Devillettes, he firstly studied in \cite{Desv2} the $C^\infty$ smoothing
effect for solutions of Cauchy problem for spatially homogeneous non-cutoff case, and conjectured Gevrey smoothing effect. Some years later, he proved in \cite{Desv} the propagation of Gevrey regularity for solutions without any assumption on the decay at infinity in $v$ variables.

    So far there have been extensive studies on the Gevrey regularity of solutions. We remark that in
\cite{MUXY-DCDS}, Morimoto et al. considered the Gevrey regularity for the linearized Boltzmann equation around the absolute Maxwellian distribution, by using a mollifier as follows:
$$
G_\delta (t,D_v)=\frac{e^{t \la D_v \ra^{1/\nu}}}{1+\delta e^{t \la D_v \ra^{1/\nu}}},\quad
0< \delta <1.
$$
The same method was used later for many related research. We refer the reader to \cite{ultra-analytic, chenhua1, chenhua2} which were about the ultra-analytic smoothing effect for spatially homogeneous nonlinear Landau equation in the Maxwellian case and the linear and non-linear Fokker-Planck equations, and \cite{kac} which studied the Kac's equation (a simplification of Boltzmann equation to one dimension case).

    Recently Morimoto considered in \cite{mu} the Gevrey regularity of $C^\infty$ solutions with the
Maxwellian decay to the Cauchy problem of spatially homogeneous Boltzmann equation. We here consider the general kinetic factor $\Phi (|v-v_*|) = |v-v_*|^{\gamma}$ taking the place of the modified kinetic factor
$\Phi (|v-v_*|) =( 1+|v-v_*|^2)^{\gamma/2}$ used in \cite{mu}, and extend his results for the range of $\gamma$ by using some new estimates.

    In the paper we consider the mild singularity case $0<s<1/2$, and we assume that
$$-1<\gamma +2s<1.$$
In the case $0<s<1/2$, Huo et al. proved in \cite{HMUY} that any weak solution $f(t,v)$ to (\ref{BE}) satisfying the natural boundedness on mass, energy and entropy, namely,
\begin{align}\label{natural bound}
\int_{\RR^n} \!\!\!f(v)[1+|v|^2+\log(1+f(v))]dv < +\infty,
\end{align}
belongs to $H^{+\infty}(\RR^n)$ for any $0<t \leq T$, and moreover,
\begin{align}\label{smooth solution}
f \in L^\infty \big([t_0,T];H^{+\infty}(\RR^n)\big),
\end{align}
for any $T>0$ and $t_0 \in (0,T)$. And in the paper \cite{amuxy-regulariz} Alexandre et al. considered a kind of solution having the Maxwellian decay, which means that,
\begin{align}\label{Maxwellian decay}
\textrm{there exists a } \delta_0>0 \textrm{ such that }
e^{\delta_0 \la v \ra ^2} f\in L^\infty \big( [t_0,T];H^{+\infty}(\RR^n) \big).
\end{align}

    We mention that we could assume $t_0=0$ in above equation by translation when considering the Gevrey
regularity. Then we introduce the following definition:
\begin{defi}\label{Def1}
We say that $f(t,v)$ is a smooth Maxwellian decay solution to the Cauchy problem (\ref{BE}) if
\begin{align*}
\left\{
\begin{array}{l}\displaystyle
f \geq 0,\,\, \not \equiv 0,
\\
\exists \delta_0 >0 \textrm{ such that }
e^{\delta_0 \la v \ra ^2} f\in L^\infty \big( [0,T];H^{+\infty}(\RR^n) \big).
\end{array}
\right.
\end{align*}
\end{defi}
    It should be noted that the same arguments as in the proof of Theorem 1.2 of \cite{amuxy-regulariz}
shows the uniqueness of the smooth Maxwellian decay solution to the Cauchy problem (\ref{BE}).

    Then before ending our review of Boltzmann equation, we recall the definition of
Gevrey spaces $G^s(\Omega)$ where $\Omega$ is an open subset of $\RR^n$. (It could be found in \cite{ultra-analytic}.)
\begin{defi}\label{Def2}
For $0<s<+\infty$, we say that $ f \in G^s(\Omega) $, if $f \in C^\infty(\Omega)$, and there exist $C>0,\,N_0>0$ such that
$$ \|\partial^\alpha f\|_{L^2(\Omega)} \leq C^{|\alpha|+1} {\{\alpha! \}^s},\quad
\forall \alpha \in \NN^n,\,|\alpha| \geq N_0.
$$
If the boundary of $\Omega$ is smooth, by using the Sobolev embedding theorem, we have the same type estimate
with $L^2$ norm replaced by any $L^p$ norm for $2 < p \leq +\infty$. On the whole space $\Omega = \RR^n$, it is also equivalent to
$$
e^{c_0 (-\Delta)^{1/(2s)}} (\partial ^{\beta_0} f) \in L^2(\RR^n), \quad
$$
for some $c_0>0$ and  $\beta_0 \in \NN^n$, where $e^{c_0 (- \Delta)^{1/(2s)}} $ is the Fourier multiplier defined by
$$
e^{c_0 (- \Delta)^{1/(2s)}} u(x)= \mathcal{F}^{-1}\big( e^{c_0 |\xi|^{1/s}}\textrm{\^{u}}(\xi) \big)\,.
$$
If $s = 1$, it is usual analytic function. If $s > 1$, it is Gevrey class function. For $0 < s < 1$,
it is called ultra-analytic function.
\end{defi}

    In the paper, we consider the smooth solution satisfying the following assumptions:
\begin{itemize}
\item[$\bf P_1$:] The solution satisfying (\ref{natural bound}) exists and satisfies
(\ref{smooth solution}), moreover, has the same Maxwellian decay as (\ref{Maxwellian decay}).
\item[$\bf P_2$:] There is a unique smooth Maxwellian decay solution to the Cauchy problem (\ref{BE}) (similar as the case considered in \cite{amuxy-regulariz}).
\end{itemize}

    Now we give our main result in the paper:
\begin{theo}\label{Propagation of Gevrey}
Let $\nu >1 $(which is indepdent of s) and assume that $ 0<s<1/2 $, $ -1<\gamma+2s<1$. Let $f(t,v)$ be a smooth
Maxwellian decay solution to the Cauchy problem (1.1). If there exist $\rho'$, $\delta'$ such that
\begin{align}\label{initial assumption}
\sup_{\alpha} \frac{\rho'^{|\alpha|} \|e^{\delta' \la v \ra ^2}\partial^{\alpha}_{v} f(0)\|_{L^2}}{\{\alpha!\}^\nu} < +\infty\,,
\end{align}
then there exist $ \rho >0 $ and $ \delta,\kappa >0 $ with $ \delta >\kappa T $ such that

\begin{align}\label{propag}
\sup_{t\in (0,T]}\sup_{\alpha} \frac{\rho^{|\alpha|} \|e^{(\delta-\kappa t) \la v \ra ^2}\partial^{\alpha}_{v} f(t)\|_{L^2}}{\{\alpha!\}^\nu} < +\infty\,.
\end{align}
\end{theo}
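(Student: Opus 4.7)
The plan is to work with the truncated Gevrey-type energy
\begin{equation*}
\mathcal{E}_N(t) = \sum_{|\alpha|\leq N}\frac{\rho^{2|\alpha|}}{\{\alpha!\}^{2\nu}}\,\|W(t)\,\partial^\alpha f(t)\|_{L^2}^2,\qquad W(t,v)=e^{(\delta-\kappa t)\la v\ra^2},
\end{equation*}
apply $\partial^\alpha$ to (\ref{BE}), use the Leibniz rule for $Q$, and pair with $W^2\partial^\alpha f$. Since $\partial_tW^2=-2\kappa\la v\ra^2W^2$, this yields
\begin{equation*}
\tfrac12\tfrac{d}{dt}\|W\partial^\alpha f\|^2_{L^2}+\kappa\|\la v\ra W\partial^\alpha f\|^2_{L^2}=\sum_{\beta\leq\alpha}\binom{\alpha}{\beta}\big(Q(\partial^\beta f,\partial^{\alpha-\beta}f),\,W^2\partial^\alpha f\big)_{L^2}.
\end{equation*}
The second term on the left is a coercive $\la v\ra^2$-gain that will play the decisive role in absorbing the hard-potential losses produced by the factor $|v-v_*|^\gamma$.

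The technical core is then a trilinear estimate for $(Q(g,h),W^2\ell)_{L^2}$ compatible with $\Phi=|v-v_*|^\gamma$. Following the framework of \cite{mu}, I would move $W$ across $Q$ via the standard commutator identity (legitimate because $s<1/2$ and $W$ is Maxwellian), extract the leading piece $\|Wh\|_{H^s}^2$, and bound the remainders by weighted $L^2$ norms controlled by the Maxwellian decay hypothesis. What is new, compared with \cite{mu}, is the singularity of $|v-v_*|^\gamma$ at the diagonal when $\gamma<0$: I would split $\Phi=\Phi\chi_{\{|v-v_*|\leq1\}}+\Phi\chi_{\{|v-v_*|>1\}}$, treat the far piece exactly as in \cite{mu} (it is dominated by $\la v-v_*\ra^\gamma$), and control the near piece by a Hardy-type inequality in $v$, available precisely because $\gamma+2s>-1$. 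The complementary bound $\gamma+2s<1$ then guarantees that the resulting loss is dominated by the coercive $\kappa\la v\ra^2$-term.

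With the trilinear bound in place, the Leibniz sum splits into principal and intermediate contributions. The extremal indices $\beta=0$ and $\beta=\alpha$ produce the principal terms, absorbed by the coercive gain plus lower-order pieces that re-enter $\mathcal{E}_N$. The remaining $1\leq|\beta|\leq|\alpha|-1$ are handled with the combinatorial inequality
\begin{equation*}
\frac{1}{\{\alpha!\}^\nu}\binom{\alpha}{\beta}\leq\frac{C}{\{\beta!\}^\nu\{(\alpha-\beta)!\}^\nu\,|\beta|^{\nu-1}|\alpha-\beta|^{\nu-1}}
\end{equation*}
together with Sobolev embedding, which trades one of the two factors for an $L^\infty$-type norm of finitely many derivatives; that norm is finite uniformly in $t\in[0,T]$ by assumption $\mathbf{P}_1$ and (\ref{Maxwellian decay}). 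Summing over $|\alpha|\leq N$ produces a differential inequality $\tfrac{d}{dt}\mathcal{E}_N\leq C\mathcal{E}_N+C_1$ with constants independent of $N$, provided $\rho$ is small relative to the data coming from $\mathbf{P}_1$, $\delta$ and $\kappa$. Gronwall yields a uniform bound $\mathcal{E}_N(t)\leq C_T(\mathcal{E}_N(0)+1)$, and letting $N\to\infty$, together with the initial hypothesis (\ref{initial assumption}), gives (\ref{propag}).

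The principal obstacle, and the reason the argument of \cite{mu} does not immediately cover this range, is the trilinear estimate near the diagonal $v=v_*$ for negative $\gamma$: the singularity of $|v-v_*|^\gamma$ defeats the direct approach used there, and the Hardy-type substitute has to be tuned so that the weighted norm it produces is exactly the one dominated by the coercive $\kappa\la v\ra^2W^2$-gain. Striking this balance while keeping the combinatorial multipliers compatible with the Gevrey index $\nu$ after summation in $\alpha$ is where the extension to $\gamma+2s\in(-1,1)$ actually lives.
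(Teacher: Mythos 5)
There is a genuine gap at the closing step. You claim that after summation the energy satisfies a \emph{linear} inequality $\tfrac{d}{dt}\mathcal{E}_N\leq C\mathcal{E}_N+C_1$ with constants independent of $N$, justified by "trading one of the two factors for an $L^\infty$-type norm of finitely many derivatives" via Sobolev embedding. That trade is only possible for the extreme Leibniz terms, where $|\beta|$ or $|\alpha-\beta|$ stays bounded; for the bulk of the sum both sub-orders grow with $|\alpha|$, so both factors must be measured in the Gevrey-weighted energy itself, and the resulting contribution is at least quadratic in $\mathcal{E}_N^{1/2}$ times the top-order factor, i.e.\ superlinear. This is exactly why the paper's key estimate (Lemma \ref{Main Lemma}) carries the term $\|f\|^{2(1+\beta)/\beta}_{l,\rho,r,N}$ with $\beta=1-(\gamma+2s)$, and why the argument is \emph{not} a Gronwall argument: one first makes the initial norm small (replacing $\rho'$ by $\rho'(1+h)^{-\nu}$ with $h$ large, using the $r$-shifted factorial norms), keeps only $3r\leq|\alpha|\leq N$ in the sup while controlling low derivatives by the smoothness hypothesis, and closes by a nonlinear bootstrap uniform in $N$. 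Without smallness and this structure, your differential inequality does not close.

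The second missing ingredient is the loss of derivative inherent in the sharp non-cutoff trilinear bounds. The estimates actually needed for the terms $Q(\mu F^{(\alpha')},F^{(\alpha'')})$ (Lemmas \ref{upper bound} and \ref{upper bound-2} in the paper, from the AMUXY theory) cost an $H^{2s}$ norm on one argument; since $2s<1$ this is converted into one full extra derivative, producing $F^{(\alpha''+1)}$. In your unshifted normalization $\{\alpha!\}^\nu$ this creates an unbounded factor of order $|\alpha''|^\nu$, which no amount of the coercive $\kappa\la v\ra^2$ gain can absorb --- that gain repairs weight loss (the $|v-v_*|^\gamma$ growth), not derivative loss. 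The paper's device is precisely the shifted norms with $\{(\alpha-r)!\}^\nu$, $r>1+\nu/(\nu-1)$, together with the second combinatorial inequality (restricted to $\alpha'\neq 0$), which makes the derivative-shifted sum uniformly bounded; your proposal has no counterpart, and also makes no use of the $H^s$ coercivity gain from the principal term that the paper keeps on the left-hand side. Your near-diagonal treatment of $|v-v_*|^\gamma$ (splitting at $|v-v_*|\leq 1$ and using integrability for $\gamma+2s>-1$) is in the same spirit as the paper's $L^3$/$L^{3/2}$ argument and is fine as a sketch, but the two structural points above are where the proof actually lives, and as written the argument does not go through.
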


\begin{rema}
It should be noted that the above theorem is similar as Theorem 1.2 in \cite{mu}, but we here extend the range of $\gamma$ and consider $\Phi=|v|^\gamma$.
\end{rema}

    By using the similar arguments as Section 4 in \cite{mu}, we obtain the Gevrey smoothing effect
of order $1/(2s) $ as follows:
\begin{theo}\label{Gevrey Regularity}
Assume that $ 0<s<1/2 $, $ -1<\gamma+2s<1$. Let $ \nu =1/(2s)$ and let $f(t,v)$ be a smooth
Maxwellian decay solution to the Cauchy problem (\ref{BE}), then for any $ t_0 \in (0,T)$, there exist $ \rho >0 $ and $ \delta,\kappa >0 $ with $ \delta >\kappa T $ such that
\begin{align}
\sup_{t\in [t_0,T]}\sup_{\alpha} \frac{\rho^{|\alpha|} \|e^{(\delta-\kappa t) \la v \ra ^2}\partial^{\alpha}_{v} f(t)\|_{L^2}}{\{\alpha!\}^\nu} < +\infty\,.
\end{align}
\end{theo}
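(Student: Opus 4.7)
The theorem is a Gevrey smoothing effect producing, with the sharp index $\nu = 1/(2s)$, regularity instantaneously from merely $C^\infty$ Maxwellian-decay data. Following the scheme of \cite{mu}, Section~4, my plan is a Fourier-multiplier approximation argument. Fix $t_0 \in (0, T)$ and set $t_1 = t_0/2$; hypothesis $\mathbf{P_1}$ yields $e^{\delta_0\langle v\rangle^2} f \in L^\infty([t_1, T]; H^{+\infty}(\RR^n))$. For $0 < \delta_1 \leq 1$ I introduce the bounded Fourier multiplier
$$
G_{\delta_1}(t, D_v) = \frac{\exp\{c_1(t - t_1)\langle D_v\rangle^{1/\nu}\}}{1 + \delta_1\exp\{c_1(t - t_1)\langle D_v\rangle^{1/\nu}\}},\qquad \nu = 1/(2s),
$$
which is bounded for each fixed $\delta_1$ and approximates the unbounded multiplier $\exp\{c_1(t-t_1)\langle D_v\rangle^{2s}\}$ as $\delta_1 \to 0^+$. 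The goal is to bound $\|e^{(\delta-\kappa t)\langle v\rangle^2}G_{\delta_1}f(t)\|_{L^2}$ uniformly in $\delta_1$ on $[t_1, T]$; passing $\delta_1 \to 0^+$ and invoking the Fourier characterization in Definition~\ref{Def2} then produces the Gevrey estimate on $[t_0, T]$.

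The heart is a weighted $L^2$-energy identity for $e^{(\delta-\kappa t)\langle v\rangle^2}G_{\delta_1}f$. Differentiating the mollifier contributes $\partial_t G_{\delta_1} \approx c_1 \langle D_v\rangle^{1/\nu} G_{\delta_1}$, which after pairing produces a loss of the form $c_1 \|e^{(\delta-\kappa t)\langle v\rangle^2}G_{\delta_1}f\|_{H^{1/(2\nu)}}^2$. Since $1/(2\nu) = s$, this loss is matched precisely by the coercivity of $Q$---namely an $H^s$ dissipation---established in the body of this paper for the extended range $-1 < \gamma + 2s < 1$. Choosing $c_1$ small enough absorbs the mollifier loss, and the remaining contributions are commutators: $[Q, G_{\delta_1}]$, $[G_{\delta_1}, e^{(\delta-\kappa t)\langle v\rangle^2}]$, and bilinear upper-bounds on $Q(f, G_{\delta_1}f)$, all controllable by the weighted Sobolev norms already bounded under $\mathbf{P_1}$. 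Choosing $\kappa$ large enough to absorb the growth from the exponential weight, a Gronwall argument closes the estimate uniformly in $\delta_1$, and letting $\delta_1 \to 0^+$ concludes.

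\textbf{Main obstacle.} The principal technical difficulty---and the reason Theorem~\ref{Gevrey Regularity} is not a black-box corollary of \cite{mu}---is that the kinetic factor $|v - v_*|^\gamma$ is genuinely singular at $v = v_*$ when $\gamma < 0$, unlike the smoothed $\langle v - v_*\rangle^\gamma$ used in \cite{mu}. Verifying that the coercivity and the commutator/trilinear bounds continue to hold over the extended range $-1 < \gamma + 2s < 1$ is exactly the work done in the proof of Theorem~\ref{Propagation of Gevrey}; the smoothing-effect argument of \cite{mu}, Section~4, then carries over once those refined estimates are substituted. The balance $\nu = 1/(2s)$ is forced by the energy identity: any smaller $\nu$ would produce an $H^{1/(2\nu)}$ loss of strictly higher order than $H^s$, which the Boltzmann coercivity cannot absorb.
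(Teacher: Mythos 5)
Your route is not the paper's. The paper obtains Theorem \ref{Gevrey Regularity} by running the derivative--induction argument of Section 4 of \cite{mu}: the engine is the coercive term $c_0\int_0^t \rho^{2|\alpha|}\|W_{l+\gamma/2}\mu^{-1}f^{(\alpha)}\|^2_{H^s}/\{(\alpha-r)!\}^{2\nu}\,d\tau$ deliberately retained on the left-hand side of the basic inequality at the end of Section \ref{Section3}; regularity is gained $s$ derivatives at a time on nested time subintervals, and the price paid per gain (a factor of an inverse square root of the subinterval length) is what produces the exponent $\nu=1/(2s)$. All the Gevrey information is carried by the derivatives $\partial^\alpha_v$ with factorial bookkeeping, which commute exactly with the Gaussian weight $e^{(\delta-\kappa t)\la v\ra^2}$ and interact with $Q$ only through the Leibniz formula. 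What you propose instead is the regularized Fourier multiplier $G_{\delta_1}(t,D_v)$ of \cite{MUXY-DCDS}; labelling it ``the scheme of \cite{mu}, Section 4'' is a mischaracterization, and, more importantly, as written it has genuine gaps.

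Concretely: (i) the nonlinear commutator $[G_{\delta_1},Q(f,\cdot)]$ and the pairing of $G_{\delta_1}Q(f,f)$ against the weighted mollified solution are not ``controllable by the weighted Sobolev norms already bounded under $\mathbf{P_1}$.'' Transferring the exponential multiplier across the collision kernel (via subadditivity of $|\xi|^{2s}$) unavoidably produces terms measured in the mollified, Gevrey-type norm of $f$ itself, so the estimate must be closed by a nonlinear Gronwall argument in $\|G_{\delta_1}f\|$; the multiplier-compatible trilinear and commutator estimates this requires, for the singular kinetic factor $|v-v_*|^\gamma$ over the whole range $-1<\gamma+2s<1$ and with Gaussian weights, appear nowhere in this paper (Lemmas \ref{coercivity}--\ref{upper bound-2} concern polynomial weights $W_l$ only) nor in \cite{mu}; they are exactly the hard step in the settings where this method is known to work (Maxwellian molecules, the Kac model, the linearized equation). (ii) The commutator $[G_{\delta_1},e^{(\delta-\kappa t)\la v\ra^2}]$ between a Gaussian weight in $v$ and an exponential weight in $D_v$ is itself nontrivial and unaddressed, and it is also needed to pass from the multiplier bound you would obtain to the stated conclusion, which is a Gaussian-weighted derivative bound (the Fourier characterization in Definition \ref{Def2} is stated without weights). (iii) The mollifier loss is $c_1\|G_{\delta_1}f\|^2_{H^s}$ without weight, whereas the coercivity of Lemma \ref{coercivity} yields only $\|\cdot\|^2_{H^s_{\gamma/2}}$, which is strictly weaker when $\gamma<0$; absorbing the loss requires an additional interpolation against the weight dissipation $\kappa\|W_{l+1}\cdot\|^2_{L^2}$ that you do not carry out. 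As it stands the proposal is a program rather than a proof; the paper's (i.e.\ \cite{mu}'s) derivative-based iteration avoids all three difficulties by construction.
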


    The rest of the paper will be organized as follows. In Section 2, we will cite some
definitions and main lemma as the authors of \cite{mu} did, then we could complete immediately
the proof of Theorem \ref{Propagation of Gevrey}. The proof of one main lemma will be given
in Section 3, where we use some new estimates on the collision operator in the framework of \cite{mu},
and obtain the new result improving the range of $\gamma$.


\section{Proof of Theorem \ref{Propagation of Gevrey}}\label{Section2}
\smallskip
    Firstly we introduce some basic definitions (see \cite{mu} for details).\\
Let $l,\,r \in \ZZ_+$ which will be fixed later. For $\delta,\, \rho >0$ we set:
$$
\|f\|_{\delta,l,\rho,\alpha,r}=\frac{ \rho^{|\alpha|} \| \la v \ra ^l e^{\delta \la v \ra ^2} \partial^\alpha_v f\|_{L^2}}{\{(\alpha - r)!\}^\nu}\,,
$$
where $ \alpha =(\alpha_1,\alpha_2,\ldots,\alpha_n) \in \mathbb{Z}^n_+ $, and we denote
$$ (\alpha - r)!=(\alpha_1 - r)! \cdots (\alpha_n - r)! \,.$$

    Now we give the following defintion:
\begin{align}
\|f\|_{l,\rho,r,N}(t) = \sup_{rn \leq |\alpha| \leq N} \|f\|_{\delta -\kappa t,l,\rho,\alpha,r}\,,
\end{align}
with fixed $\delta,\,\kappa >0$ satisfying $\delta > \kappa T$. Here $N$ satisfying $rn \leq |\alpha| \leq N$ is a fixed large number.
Then for $ h > 1$ we could obtain
\begin{align}
\|f\|_{l,\rho (1+h)^{-\nu},r,N}(t) \leq \Big( \frac{(r!)^n}{h^r}\Big)^\nu \|f\|_{l,\rho,0,N}(t)\,.
\end{align}

Now let $\rho =\rho'$ in the above inequality and take a large enough $ h $. Then it follows from the initial assumption (\ref{initial assumption}) that $ \|f\|_{l,\rho' (1+h)^{-\nu},r,N}(0) $ is as small as we want, where $\delta$ can be chosen any positive less than $\delta' > 0$ in (\ref{initial assumption}).

So (\ref{propag}) could be proved if only we could prove that (with $\rho =\rho' (1+h)^{-\nu}$)
\begin{align}
\sup_{t \in (0,T]} \|f\|_{l,\rho,r,N}(t) < \infty\,,
\end{align}
under the assumption that $ \|f\|_{l,\rho,r,N}(0) $ is sufficiently small.

    We mention that we will consider the Cauchy problem in $\RR^3$ in the paper.
\begin{lemm}\label{Main Lemma}
If $ l \geq 4$ and $r >1+\nu/(\nu-1)$ then for any $\alpha$ satisfying $ 3r \leq |\alpha| \leq N $ we have
\begin{align}\label{basic inequa}
&\|f(t)\|^2_{\delta -\kappa t,l,\rho,\alpha,r}
+ 2\kappa \!\int^t_0 \! \|f(\tau)\|^2_{\delta-\kappa t,l+1,\rho,\alpha,r}d\tau \\
\leq
&\|f(0)\|^2_{\delta,l,\rho,\alpha,r}
       + C_\kappa \!\int^t_0\! \Big( \|f\|^2_{l,\rho,r,N}(\tau)
                             + \|f\|^{2(1+\beta)/\beta}_{l,\rho,r,N}(\tau) \Big)d\tau \nonumber\\
&+ \frac{\kappa}{10}\sup_{3r \leq |\alpha| \leq N}
                         \int^t_0\! \|f(\tau)\|^2_{\delta-\kappa t,l+1,\rho,\alpha,r}d\tau, \nonumber
\end{align}
where $\beta = 1-(\gamma+2s)$.
\end{lemm}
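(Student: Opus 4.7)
The plan is to run a weighted energy estimate on $\partial^\alpha f$ for a fixed multi-index $\alpha$ with $3r \leq |\alpha| \leq N$. First I would apply $\partial^\alpha_v$ to (\ref{BE}), take the $L^2_v$ inner product with the weighted test function $\la v \ra^{2l} e^{2(\delta-\kappa t)\la v \ra^2} \partial^\alpha f$, and use the identity $\partial_t[(\delta-\kappa t)\la v\ra^2] = -\kappa \la v\ra^2$ to extract the coercive term $2\kappa \,\| \la v \ra^{l+1} e^{(\delta-\kappa t)\la v \ra^2}\partial^\alpha f\|_{L^2}^2$ on the left-hand side. Multiplying by $\rho^{2|\alpha|}/\{(\alpha-r)!\}^{2\nu}$ and integrating in $\tau \in (0,t)$ then reproduces the left-hand side and the initial-value term of (\ref{basic inequa}) verbatim.

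The main task is to control the bilinear form
$$
I_\alpha(\tau) = \Bigl(\partial^\alpha Q(f,f),\; \la v \ra^{2l} e^{2(\delta-\kappa \tau)\la v \ra^2} \partial^\alpha f\Bigr)_{L^2_v}.
$$
Expanding $\partial^\alpha Q(f,f) = \sum_{\beta\le\alpha}\binom{\alpha}{\beta}Q(\partial^\beta f,\partial^{\alpha-\beta}f)$ by Leibniz, I would split the sum into three regimes: (i) $|\beta|\leq r$, (ii) $|\alpha-\beta|\leq r$, and (iii) the intermediate range $r<|\beta|<|\alpha|-r$. The intermediate regime (iii) is handled by a trilinear upper bound for the non-cutoff collision operator combined with the combinatorial inequality
$$
\binom{\alpha}{\beta}\{(\beta-r)!\}^{\nu}\{(\alpha-\beta-r)!\}^{\nu} \leq C\,\{(\alpha-r)!\}^{\nu},
$$
valid under $r > 1 + \nu/(\nu-1)$, which repackages this piece of the sum as a product of two copies of $\|f\|_{l,\rho,r,N}(\tau)$ and thus feeds into the $\|f\|^2_{l,\rho,r,N}(\tau)$ term of (\ref{basic inequa}).

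The step I expect to be the main obstacle, and where the new estimates of the paper enter, is the extreme regime (i) (and symmetrically (ii)), in which the top derivatives concentrate on one factor and the collision operator appears through a weighted commutator whose kernel contains the singular factor $\Phi(v-v_*)=|v-v_*|^\gamma$. This is precisely where Morimoto's analysis in \cite{mu} exploited the smoother modified factor $\la v-v_*\ra^\gamma$. I would handle the singularity by decomposing $|v-v_*|^\gamma = |v-v_*|^\gamma\mathbf{1}_{|v-v_*|\le 1} + |v-v_*|^\gamma\mathbf{1}_{|v-v_*|> 1}$: the far piece is bounded by $\la v\ra^{|\gamma|}\la v_*\ra^{|\gamma|}$ and absorbed by the Maxwellian decay of $f_*$, while the near-diagonal piece is integrable in $v_*$ against the smooth Maxwellian factor thanks to $\gamma+3>0$ (a fortiori from $\gamma+2s>-1$). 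Combining these with the cancellation lemma and the standard non-cutoff coercivity bound, this regime contributes a good gain term together with an error term whose weight has exponent strictly between $l$ and $l+1$, of order $l+(\gamma+2s)/2 = l+(1-\beta)/2$ with $\beta = 1-(\gamma+2s)$.

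To match the absorbing weight $\la v\ra^{l+1}$, I would then apply Young's inequality $ab \leq \epsilon a^p + C_\epsilon b^{p/(p-1)}$ with $p = 1+\beta$, so that the Young dual exponent is exactly $p/(p-1)=(1+\beta)/\beta$. Choosing $\epsilon$ small (and $\kappa$ sufficiently large) routes the $\epsilon$-part into the absorbing term $\tfrac{\kappa}{10}\int_0^t\|f\|^2_{\delta-\kappa\tau,l+1,\rho,\alpha,r}\,d\tau$ on the right of (\ref{basic inequa}), while the conjugate-exponent part produces exactly the $\|f\|^{2(1+\beta)/\beta}_{l,\rho,r,N}(\tau)$ term. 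Reassembling the three regimes, combining with the initial-value term, and integrating over $(0,t)$ then yields (\ref{basic inequa}).
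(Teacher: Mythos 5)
Your overall energy-method skeleton (differentiate in $v$, test with the weighted derivative, Leibniz expansion, combinatorial control of the factorial factors, Young's inequality with conjugate exponent $(1+\beta)/\beta$, absorption of an $\varepsilon$-piece into the $\kappa$-term) matches the paper's strategy, but there is a genuine gap: you never address how the time-dependent Maxwellian weight $e^{(\delta-\kappa t)\la v\ra^2}$ interacts with the collision operator. Testing $Q(\partial^{\beta}f,\partial^{\alpha-\beta}f)$ against $\la v\ra^{2l}e^{2(\delta-\kappa t)\la v\ra^2}\partial^{\alpha}f$ does \emph{not} put you in a position to apply the coercivity estimate $-(Q(g,h),h)\gtrsim \|h\|^2_{H^s_{\gamma/2}}-\dots$ or the trilinear upper bounds, because those require the exponential weight to sit inside the arguments of $Q$, and $Q$ does not commute with $e^{(\delta-\kappa t)\la v\ra^2}$ up to a harmless polynomial-weight commutator. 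The paper's proof hinges on setting $\mu=e^{-(\delta-\kappa t)\la v\ra^2}$, $F=\mu^{-1}f$, and using the conservation identity $\mu\mu_*=\mu'\mu'_*$ to write $\mu^{-1}Q(f,g)=Q(\mu F,G)+\iint B\,(\mu_*-\mu'_*)F'_*G'\,dv_*d\sigma$; the second term (the family $\Psi_2^{(\alpha',\alpha'')}$, split into $G_1+G_2$ via the weight difference $W_{l-1}-W'_{l-1}$ and controlled through $|\mu_*-\mu'_*|\le C\theta^{\lambda}|v-v_*|^{\lambda}$, with separate case analysis for $0<\gamma+2s<1$ and $-1<\gamma+2s\le 0$) occupies a large part of the proof and is precisely where the kinetic factor $|v-v_*|^{\gamma}$ (rather than $\la v-v_*\ra^{\gamma}$) forces the new estimates. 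Your proposal has no counterpart to this term, so the claimed reduction to ``coercivity plus commutator plus trilinear bound'' does not go through as written.

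Two secondary inaccuracies: first, the exponent $2(1+\beta)/\beta$ does not originate in the extreme regime via the cancellation lemma; in the paper it comes from the terms $\Psi_1^{(\alpha',\alpha'')}$ with $\alpha'\neq 0$, where the trilinear upper bound (Lemma \ref{upper bound}, resp.\ Lemma \ref{upper bound-2}) puts the weight $\gamma+2s=1-\beta$ on the top-order factor, giving $\|W_{l+1-\beta}F^{(\alpha)}\|_{L^2}$, which is then interpolated as $\|W_{l}F^{(\alpha)}\|^{\beta}_{L^2}\|W_{l+1}F^{(\alpha)}\|^{1-\beta}_{L^2}$ before Young's inequality; the $H^{2s}$ norm of the middle factor, with $2s<1$, is what produces the shifted-derivative term $F^{(\alpha''+1)}$ and makes the hypothesis $r>1+\nu/(\nu-1)$ necessary in the combinatorial proposition. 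Second, your intermediate regime does not feed only into the $\|f\|^{2}_{l,\rho,r,N}$ term: each trilinear bound still carries one top-order factor $\|f\|_{\delta-\kappa\tau,l+1,\rho,\alpha,r}$, and after Cauchy--Schwarz one gets quartic contributions $\|f\|^{4}_{l,\rho,r,N}$, which the paper absorbs using $4<2(1+\beta)/\beta$; also note that $\kappa$ is fixed in the statement, so you may only take $\varepsilon$ small relative to $\kappa$, not ``$\kappa$ sufficiently large''.
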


    Then we could prove Theorem \ref{Propagation of Gevrey} with the same arguments as in Section 2
in \cite{mu}, and we omit the proof here. The proof of this lemma will be given in the next section.


\section{Proof of Lemma \ref{Main Lemma}}\label{Section3}
\smallskip
    Let $\mu =\mu_{\delta,\kappa}(t) =e^{-(\delta-\kappa t) \la v \ra ^2}  $ with $\delta > \kappa T$.
Since the translation invariance of the collision operator with respect to the variable $v$ implies that (see \cite{Desv2,ukai}), for the translation operator $\tau_h $ in $v$ by $h$, we have
$$
\tau_h Q(f,g)=Q(\tau_h f,\tau_h g).
$$
Thus we have
$$
\partial_v^\alpha Q(f,g) = \sum_{\alpha=\alpha'+\alpha''}
                           \frac{\alpha!}{\alpha'! \alpha''!} Q\big(f^{(\alpha')},f^{(\alpha'')}\big).
$$

Then we could obtain from Eq.(\ref{BE}) that
$$
\partial_t (\partial_v^\alpha f) = Q(f,f^{(\alpha)})+\sum_{\alpha' \neq 0}
                            \frac{\alpha!}{\alpha'! \alpha''!}Q\big(f^{(\alpha')},g^{(\alpha'')}\big).
$$
Multiplying both sides of the above equation by $\mu^{-1}$, we obtain
\begin{align}\label{eq1}
(\partial_t +\kappa \la v \ra ^2)(\mu^{-1} \partial_v^\alpha f)= \mu^{-1}Q(f,f^{(\alpha)})
 +\sum_{\alpha' \neq 0}\frac{\alpha!}{\alpha'! \alpha''!}\mu^{-1} Q\big(f^{(\alpha')},f^{(\alpha'')}\big).
\end{align}
Set $F=\mu^{-1}f$ and denote $F^{(\alpha)}=\mu^{-1}f^{(\alpha)}$ for $\alpha \in \ZZ^n_+$. Noticing that $\mu \mu_* = \mu' \mu'_*$, we get the following formula
$$
\mu^{-1}Q(f,g)=Q(\mu F,G) + \iint\! B(\mu_*-\mu'_*)F'_* G'dv_* d\sigma.
$$
Then it follows from (\ref{eq1}) that
\begin{align*}
(\partial_t +\kappa \la v \ra ^2) F^{(\alpha)}=&Q(\mu F,F^{(\alpha)})
+\sum_{\alpha' \neq 0}\frac{\alpha!}{\alpha'! \alpha''!}Q\big(\mu F^{(\alpha')},F^{(\alpha'')}\big)\\
&+\sum_{\alpha=\alpha'+\alpha''}\frac{\alpha!}{\alpha'! \alpha''!}
                       \iint\! B(\mu_*-\mu'_*)(F^{(\alpha')})'_* (F^{(\alpha'')})'dv_* d\sigma.
\end{align*}
Hereafter we denote $W_l=\la v \ra ^l$. Multiplying by $W_l^2F^{(\alpha)}$ both sides of the above equation and integrating with respect to $v$, we have
\begin{align}\label{EQ}
&\frac{1}{2}\frac{d}{dt}\|W_l F^{(\alpha)}\|^2 + \kappa \|W_{l+1} F^{(\alpha)}\|^2\\
=&\big( Q(\mu F,F^{(\alpha)}), W_l^2F^{(\alpha)}\big)
   +\sum_{\alpha' \neq 0}\frac{\alpha!}{\alpha'!\alpha''!}
    \Big(Q\big(\mu F^{(\alpha')},F^{(\alpha'')}\big),W_l^2F^{(\alpha)}\Big)\nonumber\\
&+\sum_{\alpha=\alpha'+\alpha''}\frac{\alpha!}{\alpha'! \alpha''!}
     \iiint\! B(\mu_*-\mu'_*)(F^{(\alpha')})'_* (F^{(\alpha'')})'W_l^2F^{(\alpha)}dv dv_* d\sigma \nonumber\\
=&\Psi_1^{(0,\alpha)}(t)
  +\sum_{\alpha' \neq 0}\frac{\alpha!}{\alpha'! \alpha''!}\Psi_1^{(\alpha',\alpha'')}(t)
  +\sum_{\alpha=\alpha'+\alpha''}\frac{\alpha!}{\alpha'! \alpha''!}\Psi_2^{(\alpha',\alpha'')}(t)
  \nonumber\\
=&\Psi_1^{(0,\alpha)}(t)+\mathcal{J}^{\alpha}(t)+\mathcal{K}^{\alpha}(t)\nonumber.
\end{align}
Then multiplying by $\frac{\rho^{2|\alpha|}}{\{(\alpha-r)!\}^{2\nu}}$ both sides of the above equation, and integrating from $0$ to $t\in (0,T]$, we obtain
\begin{align}\label{EQ2}
&\|f(t)\|^2_{\delta -\kappa t,l,\rho,\alpha,r}
+ 2\kappa \!\int^t_0 \! \|f(\tau)\|^2_{\delta-\kappa t,l+1,\rho,\alpha,r}d\tau \\
\leq
&\|f(0)\|^2_{\delta,l,\rho,\alpha,r}
 +2\int^t_0\! \frac{\rho^{2|\alpha|}}{\{(\alpha-r)!\}^{2\nu}}
   \big( \Psi_1^{(0,\alpha)}(\tau)+\mathcal{J}^{\alpha}(\tau)+\mathcal{K}^{\alpha}(\tau) \big)d\tau
   \nonumber.
\end{align}
Firstly we consider the estimate on $\Psi_2^{(\alpha',\alpha'')}(t)$. \\
By (\ref{EQ}), we have
\begin{align*}
&\Psi_2^{(\alpha',\alpha'')}(t)\\
=&\iiint\!B(\mu_*-\mu'_*)(F^{(\alpha')})'_* (W_{l-1}F^{(\alpha'')})'W_{l+1}F^{(\alpha)}dv dv_* d\sigma\\
&+\iiint\!B(\mu_*-\mu'_*)(F^{(\alpha')})'_* \big(W_{l-1}-W'_{l-1}\big)
  (F^{(\alpha'')})'W_{l+1}F^{(\alpha)}dv dv_* d\sigma\\
=&\Psi_{2,1}^{(\alpha',\alpha'')}(t)+\Psi_{2,2}^{(\alpha',\alpha'')}(t).
\end{align*}
Notice that, see Lemma 2.3 in \cite{amuxy-regulariz},
\begin{align}
\big|W_{l-1}-W'_{l-1}\big|
&\leq C\sin(\theta/2)\big( W'_{l-1}+W'_{1,*}W'_{l-2}
  + \sin^{(l-2)}(\theta/2)W'_{l-1,*}\big)\\
&\leq C\big( \theta W'_{l-1}W'_{1,*}+\theta^{l-1}W'_{l-1,*} \big) \nonumber.
\end{align}
We split $\Psi_{2,2}^{(\alpha',\alpha'')}(t)$ into $G_1+G_2$ corresponding to the two terms of the right-hand side. On the other hand, we could get
\begin{align}
|\mu_*-\mu'_*| \leq C\theta^\lambda |v'-v'_*|^\lambda = C\theta^\lambda |v-v_*|^\lambda,\quad
\lambda \in [0,1],\,t\in [0,T].
\end{align}
In the case of $0<\gamma +2s<1$, there exists a $\lambda \in (0,1)$ such that $\lambda>2s$, $\gamma+\lambda \leq 1$. So we have $\gamma+\lambda > 0 $ and
$|v'-v'_*|^{\gamma+\lambda}\leq \la v' \ra^{\gamma+\lambda} \la v'_* \ra^{\gamma+\lambda} $ immediately. Then we have
\begin{align*}
G_1\leq &\iiint b(\cos \theta) \theta^{\lambda +1} |v'-v'_*|^{\gamma+\lambda}
W'_{l-1}W'_{1,*}\big|(F^{(\alpha')})'_* \big|
\big|(F^{(\alpha'')})'\big|\big|(W_{l+1}F^{(\alpha)})\big|dvdv_*d\sigma\\
\leq &\iiint b(\cos \theta) \theta^{\lambda +1}
\big|(W_{1+\gamma+\lambda}F^{(\alpha')})'_* \big| \big|(W_{l-1+\gamma+\lambda}F^{(\alpha'')})'\big|\big|(W_{l+1}F^{(\alpha)})\big|dvdv_*d\sigma\\
=&\iiint b(\cos \theta) \theta^{\lambda +1}
\big|(W_{1+\gamma+\lambda}F^{(\alpha')})_* \big| \big|(W_{l-1+\gamma+\lambda}F^{(\alpha'')})\big|\big|(W_{l+1}F^{(\alpha)})'\big|dvdv_*d\sigma\\
\leq &\Big( \iiint b(\cos \theta) \theta^{\lambda +1}
\big|(W_{1+\gamma+\lambda}F^{(\alpha')})_*\big| \big|(W_{l-1+\gamma+\lambda}F^{(\alpha'')})\big|^2dvdv_*d\sigma \Big)^{1/2}\\
 &\times \Big( \iiint b(\cos \theta) \theta^{\lambda +1}
\big|(W_{1+\gamma+\lambda}F^{(\alpha')})_*\big| \big|(W_{l+1}F^{(\alpha)})'\big|^2dvdv_*d\sigma \Big)^{1/2}\\
=&G^{1/2}_{1,1}\times G^{1/2}_{1,2}.
\end{align*}
Noticing $0<s<1/2$, we have $\int_{\SS^2}\! b(\cos \theta) \theta^{\lambda+1} d\sigma  \leq C$. Thus
$$
G_{1,1}\leq C\|W_{1+\gamma+\lambda}F^{(\alpha')} \|_{L^1}\|W_{l-1+\gamma+\lambda}F^{(\alpha'')} \|^2_{L^2}.
$$
By using the change of variables $v\mapsto v'=\frac{v+v_*}{2}+\frac{|v-v_*|}{2}\sigma$ for fixed $\sigma$ and $v_*$ whose Jacobian satisfying:
$$
\left|\frac{dv'}{dv}\right|=\frac{\cos^2(\theta/2)}{4},
$$
we have then
\begin{align*}
G_{1,2}\leq &\iiint b(\cos \theta) \theta^{\lambda +1}\frac{4}{\cos^2(\theta/2)}
\big|(W_{1+\gamma+\lambda}F^{(\alpha')})_*\big| \big|(W_{l+1}F^{(\alpha)})\big|^2dvdv_*d\sigma\\
\leq &C\|W_{1+\gamma+\lambda}F^{(\alpha')} \|_{L^1}\|W_{l+1}F^{(\alpha)}\|^2_{L^2}.
\end{align*}
So we could obtain in the case of $0<\gamma +2s<1$:
\begin{align*}
G_1\leq &C\|W_{1+\gamma+\lambda}F^{(\alpha')} \|_{L^1}\|W_{l-1+\gamma+\lambda}F^{(\alpha'')} \|_{L^2}
          \|W_{l+1}F^{(\alpha)}\|_{L^2}\\
\leq &C\|W_l F^{(\alpha')} \|_{L^2} \|W_l F^{(\alpha'')} \|_{L^2} \|W_{l+1}F^{(\alpha)}\|_{L^2},
\end{align*}
if $l \geq 4>5/2+\gamma +\lambda$ by using the embedding
$$
L^2_{3/2+\varepsilon}(\RR^3) \subset L^1(\RR^3),\, \varepsilon >0.
$$

    And on the other hand,
\begin{align*}
G_2\leq &\iiint b(\cos \theta) \theta^{\lambda +l-1} |v'-v'_*|^{\gamma+\lambda}
\big|(W_{l-1}F^{(\alpha')})'_* \big|
\big|(F^{(\alpha'')})'\big|\big|(W_{l+1}F^{(\alpha)})\big|dvdv_*d\sigma\\
\leq &\iiint b(\cos \theta) \theta^{\lambda +l-1}
\big|(W_{l-1+\gamma+\lambda}F^{(\alpha')})_* \big| \big|(W_{\gamma+\lambda}F^{(\alpha'')})\big|\big|(W_{l+1}F^{(\alpha)})'\big|dvdv_*d\sigma\\
\leq &\Big( \iiint b(\cos \theta) \theta^{\lambda}
\big|(W_{\gamma+\lambda}F^{(\alpha'')})\big| \big|(W_{l-1+\gamma+\lambda}F^{(\alpha')})_*\big|^2dvdv_*d\sigma \Big)^{1/2}\\
 &\times \Big( \iiint b(\cos \theta) \theta^{\lambda +2l-2}
\big|(W_{\gamma+\lambda}F^{(\alpha'')})\big| \big|(W_{l+1}F^{(\alpha)})'\big|^2dvdv_*d\sigma \Big)^{1/2}\\
=&G^{1/2}_{2,1}\times G^{1/2}_{2,2}.
\end{align*}
Firstly we have
$$
G_{2,1}\leq C\|W_{\gamma+\lambda}F^{(\alpha'')} \|_{L^1}\|W_{l-1+\gamma+\lambda}F^{(\alpha')} \|^2_{L^2}.
$$
As for $G_{2,2}$, we use the change of variables $v_*\mapsto v'=\frac{v+v_*}{2}+\frac{|v-v_*|}{2}\sigma$
whose Jacobian satisfying:
$$
\left|\frac{dv'}{dv_*}\right|=\frac{\sin^2(\theta/2)}{4}.
$$
Then we have
\begin{align*}
G_{2,2}\leq &\iiint b(\cos \theta) \theta^{\lambda +2l-2}\frac{4}{\sin^2(\theta/2)}
\big|(W_{\gamma+\lambda}F^{(\alpha'')}) \big| \big|(W_{l+1}F^{(\alpha)})'\big|^2dvdv'd\sigma\\
\leq &C\|W_{\gamma+\lambda}F^{(\alpha'')} \|_{L^1}\|W_{l+1}F^{(\alpha)}\|^2_{L^2},
\end{align*}
if $l\geq 4 >2+s-\lambda/2$. Hence we obtain
\begin{align*}
G_2\leq &C\|W_{\gamma+\lambda}F^{(\alpha'')} \|_{L^1}\|W_{l-1+\gamma+\lambda}F^{(\alpha')} \|_{L^2}
          \|W_{l+1}F^{(\alpha)}\|_{L^2}\\
\leq &C\|W_l F^{(\alpha')} \|_{L^2} \|W_l F^{(\alpha'')} \|_{L^2} \|W_{l+1}F^{(\alpha)}\|_{L^2}.
\end{align*}

    On the other hand if $-1<\gamma +2s \leq 0$, we choose $\lambda =2s$. Noticing that
$$
\la v-v_* \ra^r \leq C \la v \ra^{|r|} \la v_* \ra^r, r \in \RR,
$$
then we have
\begin{align*}
G_1\leq &\iiint b(\cos \theta) \theta^{2s +1} \frac{|v'-v'_*|^{\gamma+2s}}{\la v'-v'_* \ra^{\gamma+2s}}
\la v'_* \ra^{-(\gamma+2s)} \la v' \ra^{\gamma+2s} W'_{l-1}W'_{1,*}
\big|(F^{(\alpha')})'_* \big| \\
&\times  \big|(F^{(\alpha'')})'\big| \big|(W_{l+1}F^{(\alpha)})\big|dvdv_*d\sigma\\
\leq &\iiint b(\cos \theta) \theta^{2s +1}\big( 1+|v'-v'_*|^{\gamma +2s}\Large\textbf{1}_{|v'-v'_*| \leq 1} \big) \big|(W_{1-\gamma-2s}F^{(\alpha')})'_* \big| \\
&\times \big|(W_{l-1+\gamma+2s}F^{(\alpha'')})'\big| \big|(W_{l+1}F^{(\alpha)})\big|dvdv_*d\sigma\\
=&\iiint b(\cos \theta) \theta^{2s +1} \big( 1+|v-v_*|^{\gamma +2s}\Large\textbf{1}_{|v-v_*| \leq 1} \big) \big|(W_{1-\gamma-2s}F^{(\alpha')})_* \big|\\
&\times \big|(W_{l-1+\gamma+2s}F^{(\alpha'')})\big|\big|(W_{l+1}F^{(\alpha)})'\big|dvdv_*d\sigma\\
\leq &\Big( \iiint b(\cos \theta) \theta^{2s +1}\big( 1+|v-v_*|^{\gamma +2s}\Large\textbf{1}_{|v-v_*| \leq 1} \big) \big|(W_{1-\gamma-2s}F^{(\alpha')})_*\big|\\
&\qquad \times \big|(W_{l-1+\gamma+2s}F^{(\alpha'')})\big|^2dvdv_*d\sigma \Big)^{1/2}\\
 &\times \Big( \iiint b(\cos \theta) \theta^{2s +1}\big( 1+|v-v_*|^{\gamma +2s}\Large\textbf{1}_{|v-v_*| \leq 1} \big) \big|(W_{1-\gamma-2s}F^{(\alpha')})_*\big| \\
&\qquad \times \big|(W_{l+1}F^{(\alpha)})'\big|^2dvdv_*d\sigma \Big)^{1/2}\\
=&G^{1/2}_{1,3}\times G^{1/2}_{1,4}.
\end{align*}
We mention that there exists a positive $\delta$ such that $\gamma +2s=\delta -1$, which implies that
$$
\chi(v)=|v|^{\gamma +2s}\Large\textbf{1}_{|v| \leq 1} \in L^{\frac{3-\varepsilon}{1-\delta}}.
$$
Choosing $\varepsilon=3\delta$, we have $\chi(v)\in L^3$. And since $0<s<1/2$, we have
$$
G_{1,3}\leq C
\big(\|W_{1-\gamma-2s}F^{(\alpha')} \|_{L^1}+ \|W_{1-\gamma-2s}F^{(\alpha')} \|_{L^{\frac{3}{2}}}\big) \|W_{l-1+\gamma+2s}F^{(\alpha'')} \|^2_{L^2}.
$$
By using the change of variables $v\mapsto v'$, we get
\begin{align*}
G_{1,4}\leq &\iiint b(\cos \theta) \theta^{2s +1}\frac{4}{\cos^2(\theta/2)}
\big( 1+|v-v_*|^{\gamma +2s}\Large\textbf{1}_{|v-v_*| \leq 1} \big)\\
&\times \big|(W_{1-\gamma-2s}F^{(\alpha')})_*\big| \big|(W_{l+1}F^{(\alpha)})\big|^2dvdv_*d\sigma\\
\leq &C\big(\|W_{1-\gamma-2s}F^{(\alpha')} \|_{L^1}
+ \|W_{1-\gamma-2s}F^{(\alpha')} \|_{L^{\frac{3}{2}}}\big)
\|W_{l+1}F^{(\alpha)}\|^2_{L^2}.
\end{align*}
So we obtain in the case of $-1<\gamma+2s\leq 0$:
\begin{align*}
G_1\leq &C
\big(\|W_{1-\gamma-2s}F^{(\alpha')} \|_{L^1} + \|W_{1-\gamma-2s}F^{(\alpha')} \|_{L^{\frac{3}{2}}}\big)
\|W_{l-1+\gamma+2s}F^{(\alpha'')}\|_{L^2} \\
&\times \|W_{l+1}F^{(\alpha)}\|_{L^2}\\
\leq &C\|W_l F^{(\alpha')} \|_{L^2} \|W_l F^{(\alpha'')} \|_{L^2} \|W_{l+1}F^{(\alpha)}\|_{L^2},
\end{align*}
if $l \geq 4>\max{\{5/2-(\gamma +2s), 3/2-(\gamma +2s)\}}$ by using the embedding
$$
L^2_{3/2+\varepsilon}(\RR^3) \subset L^1(\RR^3),\quad L^2_{1/2+\varepsilon}(\RR^3) \subset L^{\frac{3}{2}}(\RR^3),\,\varepsilon >0.
$$

    And as for $G_2$, we have
\begin{align*}
G_2\leq &\iiint b(\cos \theta) \theta^{l-1+2s} |v'-v'_*|^{\gamma+2s}
\big|(W_{l-1}F^{(\alpha')})'_* \big|
\big|(F^{(\alpha'')})'\big|\big|(W_{l+1}F^{(\alpha)})\big|dvdv_*d\sigma\\
\leq &\iiint b(\cos \theta) \theta^{l-1+2s}\big( 1+|v-v_*|^{\gamma +2s}\Large\textbf{1}_{|v-v_*| \leq 1} \big) \big|(W_{l-1+\gamma+2s}F^{(\alpha')})_* \big| \\
&\times \big|(W_{-\gamma-2s}F^{(\alpha'')})\big| \big|(W_{l+1}F^{(\alpha)})'\big|dvdv_*d\sigma\\
\leq &\Big( \iiint b(\cos \theta)\, \theta \, \big( 1+|v-v_*|^{\gamma +2s}\Large\textbf{1}_{|v-v_*| \leq 1} \big) \,\big|(W_{-\gamma-2s}F^{(\alpha'')})\big| \\
&\qquad \times \big|(W_{l-1+\gamma+2s}F^{(\alpha')})_*\big|^2dvdv_*d\sigma \Big)^{1/2}\\
 &\times \Big( \iiint b(\cos \theta) \theta^{2l-3+4s}\big( 1+|v-v_*|^{\gamma +2s}\Large\textbf{1}_{|v-v_*| \leq 1} \big) \big|(W_{-\gamma-2s}F^{(\alpha'')})\big| \\
&\qquad \times\big|(W_{l+1}F^{(\alpha)})'\big|^2dvdv_*d\sigma \Big)^{1/2}\\
=&G^{1/2}_{2,3}\times G^{1/2}_{2,4}.
\end{align*}
Then we have the following estimate:
$$
G_{2,3}\leq C\big(\|W_{-\gamma-2s}F^{(\alpha'')} \|_{L^1} + \|W_{-\gamma-2s}F^{(\alpha'')} \|_{L^{\frac{3}{2}}} \big) \|W_{l-1+\gamma+2s}F^{(\alpha')} \|^2_{L^2}.
$$
By using the change of variables $v_*\mapsto v'$, we get
\begin{align*}
G_{2,4}\leq &\iiint b(\cos \theta) \theta^{2l-3+4s}\frac{4}{\sin^2(\theta/2)}
\big|(W_{-\gamma-2s}F^{(\alpha'')}) \big| \big|(W_{l+1}F^{(\alpha)})'\big|^2dvdv'd\sigma\\
\leq &C\big(\|W_{-\gamma-2s}F^{(\alpha'')} \|_{L^1} + \|W_{-\gamma-2s}F^{(\alpha'')} \|_{L^{\frac{3}{2}}} \big) \|W_{l+1}F^{(\alpha)}\|^2_{L^2},
\end{align*}
if $l>5/2-s$. Hence we obtain
\begin{align*}
G_2\leq &C
\big(\|W_{-\gamma-2s}F^{(\alpha'')} \|_{L^1} + \|W_{-\gamma-2s}F^{(\alpha'')} \|_{L^{\frac{3}{2}}} \big) \|W_{l-1+\gamma+2s}F^{(\alpha')} \|_{L^2} \\
&\times \|W_{l+1}F^{(\alpha)}\|_{L^2}\\
\leq &C\|W_l F^{(\alpha')} \|_{L^2} \|W_l F^{(\alpha'')} \|_{L^2} \|W_{l+1}F^{(\alpha)}\|_{L^2}.
\end{align*}
if $l \geq 4>\max{\{3/2-(\gamma +2s), 1/2-(\gamma +2s),5/2-s\}}$.

    So we have the estimate for $\Psi_{2,2}^{(\alpha',\alpha'')}$ with $-1<\gamma+2s<1$:
\begin{align}\label{Psi_22}
\left|\Psi_{2,2}^{(\alpha',\alpha'')}\right| \leq G_1+G_2
\leq C\|W_l F^{(\alpha')} \|_{L^2} \|W_l F^{(\alpha'')} \|_{L^2} \|W_{l+1}F^{(\alpha)}\|_{L^2},
\end{align}
if $l\geq 4$.

    As for $\Psi_{2,1}^{(\alpha',\alpha'')}$, we choose the $\lambda \in (0,1)$ such that
$\lambda >2s,\, \gamma+\lambda \leq 1$ and hence $1\geq \gamma+\lambda>-1$. If $0<\gamma+\lambda \leq 1$, then we have
\begin{align*}
\left|\Psi_{2,1}^{(\alpha',\alpha'')}\right|
\leq &C\iiint\!b(\cos \theta)\theta^\lambda|v'-v'_*|^{\gamma +\lambda}\big|(F^{(\alpha')})'_*\big| \big|(W_{l-1}F^{(\alpha'')})'\big|\big|(W_{l+1}F^{(\alpha)})\big|dv dv_* d\sigma \\
\leq &C\iiint\!b(\cos \theta)\theta^\lambda\big|(W_{\gamma +\lambda}F^{(\alpha')})_* \big|
\big|(W_{l-1+\gamma +\lambda}F^{(\alpha'')})\big|\big|(W_{l+1}F^{(\alpha)})'\big|dv dv_* d\sigma
\\
\leq &C\|W_{\gamma+\lambda}F^{(\alpha')} \|_{L^1}\|W_{l-1+\gamma+\lambda}F^{(\alpha'')} \|_{L^2}
          \|W_{l+1}F^{(\alpha)}\|_{L^2} .
\end{align*}
And if $-1<\gamma+\lambda \leq 0$, we could obtain the similar estimate as on $G_1$ :
\begin{align*}
\left|\Psi_{2,1}^{(\alpha',\alpha'')}\right|
\leq &C\iiint\!b(\cos \theta)\theta^\lambda \big( 1+|v'-v'_*|^{\gamma +\lambda}\Large\textbf{1}_{|v'-v'_*| \leq 1} \big)
\big|(W_{-\gamma-\lambda} F^{(\alpha')})'_*\big| \\
&\times \big|(W_{l-1+\gamma+\lambda} F^{(\alpha'')})'\big|\big|(W_{l+1}F^{(\alpha)})\big|dv dv_* d\sigma \\
= &C\iiint\!b(\cos \theta)\theta^\lambda \big( 1+|v-v_*|^{\gamma +\lambda}\Large\textbf{1}_{|v-v_*| \leq 1} \big) \big|(W_{-\gamma -\lambda}F^{(\alpha')})_* \big|\\
&\times \big|(W_{l-1+\gamma +\lambda}F^{(\alpha'')})\big|\big|(W_{l+1}F^{(\alpha)})'\big|dv dv_* d\sigma
\\
\leq &C\big( \|(W_{-\gamma -\lambda}F^{(\alpha')}) \|_{L^1} + \|(W_{-\gamma -\lambda}F^{(\alpha')}) \|_{L^{\frac{3}{2}}} \big)\|W_{l-1+\gamma+\lambda}F^{(\alpha'')} \|_{L^2}\\
&\times \|W_{l+1}F^{(\alpha)}\|_{L^2} .
\end{align*}
So we have under the assumption $-1<\gamma+2s<1$:
\begin{align}\label{Psi_21}
\left|\Psi_{2,1}^{(\alpha',\alpha'')}\right|
\leq C\|W_l F^{(\alpha')} \|_{L^2} \|W_l F^{(\alpha'')} \|_{L^2} \|W_{l+1}F^{(\alpha)}\|_{L^2},
\end{align}
if $l \geq 4$.

    Since $\Psi_{2,2}^{(\alpha',\alpha'')}$ and $\Psi_{2,1}^{(\alpha',\alpha'')}$ have the same
bound \big(see (\ref{Psi_22}) and (\ref{Psi_21})\big), we obtain
\begin{align}
\frac{\rho^{2|\alpha|}\big| \Psi_2^{(\alpha',\alpha'')}(t)\big|}{\{(\alpha-r)!\}^{2\nu}}
\leq & C \frac{\{(\alpha'-r)!\}^\nu\{(\alpha''-r)!\}^\nu}{\{(\alpha-r)!\}^\nu}\\
&\times \|f(t)\|_{\delta-\kappa t,l,\rho,\alpha',r}
       \|f(t)\|_{\delta-\kappa t,l,\rho,\alpha'',r}\|f(t)\|_{\delta-\kappa t,l+1,\rho,\alpha,r}.
       \nonumber
\end{align}

    As for $\Psi_1^{(0,\alpha)}$ we decompose
\begin{align*}
\Psi_1^{(0,\alpha)}=&\left( Q(\mu F,W_l F^{(\alpha)}),W_l F^{(\alpha)} \right)\\
                &+\left( W_l Q(\mu F,F^{(\alpha)})-Q(\mu F,W_l F^{(\alpha)}),W_l F^{(\alpha)} \right)\\
                   =&\Psi_{1,1}^{(0,\alpha)}+\Psi_{1,2}^{(0,\alpha)}.
\end{align*}

In order to estimate the first term $\Psi_{1,1}^{(0,\alpha)}$ we cite the following coercivity estimate given in \cite{chenyemin} (we mention that the form here is slight different from \cite{chenyemin} because it need only to consider $0<s<1/2$).
\begin{lemm}\label{coercivity}
Let $0<s<1/2$ and assume that the nonnegative function $g$ satisfies
$$
\|g\|_{L^1_2(\RR^3_v)}+\|g\|_{L\log L(\RR^3_v)}<\infty.
$$
Then there exists a constant $C_g>0$ depending on $B$, $\|g\|_{L^1_1(\RR^3_v)}$ and
$\|g\|_{L\log L(\RR^3_v)}$ such that in the case of $0<\gamma +2s<1$, there holds
\begin{align}
-\left(Q(g,f),f\right)_{L^2}\geq
&C_g\|f\|^2_{H^s_{\gamma/2}} - C\|g\|_{L^1_{|\gamma|}}\|f\|^2_{H^\eta_{\gamma/2}}\\
&-C\big( \|g\|_{L^1_{\widetilde{\gamma}}}
+ C_g\|g\|^2_{L^1} \big) \|f\|^2_{L^2_{\gamma/2}}\,, \nonumber
\end{align}
where $\eta <s$ depends on $\gamma$, $s$ and
$\widetilde{\gamma}=|\gamma+2|\Large\emph{1}_{\gamma \leq 0}+|\gamma-2|\Large\emph{1}_{\gamma \geq 0};$
\\
And in the case of $-1<\gamma +2s \leq 0$, there holds
\begin{align}
-\left(Q(g,f),f\right)_{L^2}\geq
&C_g\|f\|^2_{H^s_{\gamma/2}} -C\big( \|g\|_{L^1_{|\gamma|}}
+ \|g\|_{L^{\frac{3}{2}}_{|\gamma|}} \big) \|f\|^2_{H^\eta_{\gamma/2}}\\
&- C\|g\|_{L^1_{|\gamma+2|}}\|f\|^2_{L^2_{\gamma/2}}\,, \nonumber
\end{align}
with $\eta <s$.
\end{lemm}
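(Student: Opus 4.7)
The plan is to prove the coercivity estimate by following the Alexandre--Desvillettes--Villani--Wennberg (ADVW) strategy, carefully adapted to the kinetic factor $\Phi(|v-v_*|)=|v-v_*|^\gamma$ and split according to the sign of $\gamma+2s$.

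First, I would decompose $-(Q(g,f),f)_{L^2}$ via the standard pre/post-collisional symmetrization together with the cancellation lemma into
$$-(Q(g,f),f)_{L^2} = \mathcal{D}(g,f) + \mathcal{R}(g,f),$$
where $\mathcal{D}(g,f)=\tfrac{1}{2}\iiint B(v-v_*,\sigma)\,g_*(f'-f)^2\,d\sigma\,dv_*\,dv \geq 0$ is the coercive bilinear form and $\mathcal{R}(g,f)$ is the residual coming from $\int_{\SS^2} B\,d\sigma$. For the main term, I would derive
$$\mathcal{D}(g,f)\geq C_g\,\|f\|^2_{H^s_{\gamma/2}} - C\|g\|_{L^1_{|\gamma|}}\,\|f\|^2_{L^2_{\gamma/2}},$$
using the ADVW non-concentration argument: the $L^1_2$ plus $L\log L$ bounds on $g$ guarantee a set of positive measure on which $g$ is bounded below, and a Bobylev/Fourier computation with the angular singularity $\sin\theta\,b(\cos\theta)\sim K\theta^{-1-2s}$ transfers this into the $H^s_{\gamma/2}$ lower bound. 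The constant $C_g>0$ depends only on $B$, $\|g\|_{L^1_1}$ and $\|g\|_{L\log L}$, as claimed.

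Second, I would estimate the cancellation remainder $\mathcal{R}(g,f)$, which after carrying out the angular integral has the schematic form $\iint (g\ast K_\gamma)(v)\,f(v)^2\,dv$ with a kernel dominated by $|v-v_*|^\gamma$. In the hard case $0<\gamma+2s<1$, I would use the splitting $|v-v_*|^\gamma\leq C\langle v\rangle^{\gamma}\langle v_*\rangle^{|\gamma|}$ when $\gamma\geq 0$, respectively $|v-v_*|^\gamma\leq C\langle v\rangle^{|\gamma|}\langle v_*\rangle^{\gamma}$ combined with a weighted Hardy-type inequality when $\gamma<0$. Bookkeeping the worst exponent that appears on the $g$-side in the two sub-cases produces exactly $\widetilde{\gamma}=|\gamma+2|\mathbf{1}_{\gamma\leq 0}+|\gamma-2|\mathbf{1}_{\gamma\geq 0}$, and reduces everything to $\|g\|_{L^1_{\widetilde{\gamma}}}$ and $\|g\|^2_{L^1}$ times either $\|f\|^2_{L^2_{\gamma/2}}$ or $\|f\|^2_{H^\eta_{\gamma/2}}$ for some $\eta<s$ (the latter arising from a commutator/weight-transfer argument through fractional differentiation).

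Third, the main technical obstacle is the soft case $-1<\gamma+2s\leq 0$, where $|v-v_*|^\gamma$ is singular at $v=v_*$ and $L^1$-bounds on $g$ are no longer enough to control $g\ast|\cdot|^\gamma$. Here I would localize to $\{|v-v_*|\leq 1\}$ and exploit the fact that $\gamma+2s>-1$ keeps the kernel $|v-v_*|^\gamma\mathbf{1}_{|v-v_*|\leq 1}$ just integrable enough to belong to a suitable $L^3$-type space after combining with the angular factor. Applying a Hardy--Littlewood--Sobolev (equivalently Sobolev embedding) estimate yields the pairing $\|g\|_{L^{3/2}_{|\gamma|}}\cdot\|f\|^2_{H^\eta_{\gamma/2}}$ via duality, which is precisely the new norm appearing in the second case of the lemma; the far regime $|v-v_*|\geq 1$ is bounded trivially by $\|g\|_{L^1_{|\gamma+2|}}\|f\|^2_{L^2_{\gamma/2}}$. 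The hard part is to keep the coercive term $C_g\|f\|^2_{H^s_{\gamma/2}}$ from being absorbed, which requires the loss $\eta<s$ in the fractional Sobolev index of the error and a careful interpolation at the point of use of the lemma. The threshold $\gamma+2s>-1$ is sharp for this bookkeeping, as it is exactly the integrability borderline for the localized kernel.
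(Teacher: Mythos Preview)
The paper does not actually prove this lemma: immediately after stating it, the authors write ``We omit the proof here'' and attribute the estimate to Chen--He \cite{chenyemin} (with the remark that the form differs slightly because only $0<s<1/2$ is needed). So there is no in-paper proof to compare against; the lemma is quoted as a black box.

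That said, your outline is the standard route and is consistent with how such coercivity bounds are obtained in the cited literature. The decomposition into the nonnegative cross part $\mathcal{D}(g,f)$ plus a cancellation-lemma remainder, followed by the ADVW entropy/non-concentration argument to extract $C_g\|f\|^2_{H^s_{\gamma/2}}$, is exactly the mechanism behind these estimates. Your treatment of the remainder---splitting $|v-v_*|^\gamma$ into near and far pieces, using a pointwise weight inequality in the hard regime and a Hardy--Littlewood--Sobolev/Sobolev pairing to produce the $\|g\|_{L^{3/2}_{|\gamma|}}$ norm in the soft regime---matches the way the paper itself handles the analogous kinetic-factor singularity elsewhere (see the $G_{1,3}$, $G_{1,4}$ estimates in Section~3, where $|v|^{\gamma+2s}\mathbf{1}_{|v|\le 1}\in L^3$ is invoked for exactly this purpose). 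One point you leave vague is how the intermediate exponent $\eta<s$ actually arises: in practice it comes not from a ``commutator/weight-transfer'' step but from bounding the upper part of $Q$ (the gain minus loss without symmetrization) in $H^{\eta}$ for some $\eta<s$, so that after subtraction from the $H^s$ coercive piece one can later interpolate and absorb. If you were writing this out in full you would need to make that step precise, but as a sketch aligned with the cited source your plan is sound.
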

We omit the proof here. Using this lemma with $g=\mu F$, $f=W_l F^{(\alpha)}$, then in the case of $0<\gamma+2s<1$ we have
\begin{align}
\Psi_{1,1}^{(0,\alpha)}+c_0\| W_{l+\gamma/2} F^{(\alpha)} \|^2_{H^s} \leq
&C\big( \| \mu W_{\widetilde{\gamma}} F \|_{L^1}
      +C\| \mu F \|^2_{L^1} \big)\|W_{l+s/2} F^{(\alpha)}\|^2_{L^2}\\
&+ C\| \mu W_{|\gamma|} F \|_{L^1} \| W_{l+\gamma/2} F^{(\alpha)} \|^2_{H^\eta}, \nonumber
\end{align}
where $c_0$ is a constant depending only on the bounds of $\|f\|_{L^1_1}$, $ \|f\|_{L\log L}$.\\
We now need the following interpolation inequality, for $0<\eta <s$ and any $\varepsilon >0$,
$$
\|f\|_{H^\eta}\leq \|f\|^{\eta/s}_{H^s} \|f\|^{(s-\eta)/s}_{L^2}
              \leq \varepsilon \|f\|_{H^s} + \varepsilon^{-\frac{\eta}{s-\eta}} \|f\|_{L^2}.
$$
Then
\begin{align*}
&\| \mu W_{|\gamma|} F \|_{L^1} \| W_{l+\gamma/2} F^{(\alpha)} \|^2_{H^\eta} \\
\leq &\varepsilon \| W_{l+\gamma/2} F^{(\alpha)} \|^2_{H^s}
+C_\varepsilon \| \mu W_{|\gamma|} F \|^{s/(s-\eta)}_{L^1} \| W_{l+\gamma/2} F^{(\alpha)} \|^2_{L^2}.
\end{align*}
Now choosing $\varepsilon=c_0/2$, we have
\begin{align}\label{coercive-1}
&\Psi_{1,1}^{(0,\alpha)}+\frac{c_0}{2}\| W_{l+\gamma/2} F^{(\alpha)} \|^2_{H^s} \\
\leq &C\big( \| \mu W_{\widetilde{\gamma}} F \|_{L^1} + \| \mu F \|^2_{L^1}
+\| \mu W_{|\gamma|} F \|^{s/(s-\eta)}_{L^1} \big)\| W_{l+\gamma/2} F^{(\alpha)} \|^2_{L^2}
      \nonumber\\
\leq &C \| W_{l+\gamma/2} F^{(\alpha)} \|^2_{L^2}\nonumber\\
\leq &C \| W_l F^{(\alpha)} \|_{L^2} \| W_{l+\gamma} F^{(\alpha)} \|_{L^2},\nonumber
\end{align}
where we used the fact
$$
\big( \| \mu W_{\widetilde{\gamma}} F \|_{L^1} + \| \mu F \|^2_{L^1}
+\| \mu W_{|\gamma|} F \|^{s/(s-\eta)}_{L^1} \big) \leq C.
$$
We remark here when considering the above inequality, we could obtain, for example,
$$
\| \mu W_{\widetilde{\gamma}} F \|_{L^1} \leq C\|W_l F \|_{L^2}
=C\|f\|_{\delta-\kappa t,l,\rho,0,r} \leq C,
$$
due to Definition \ref{Def1} about the smooth Maxwellian decay solution. The rest two terms could be considered similarly.\\
In the case of $-1<\gamma+2s \leq 0$, using the similar method we obtain
\begin{align}\label{coercive-2}
&\Psi_{1,1}^{(0,\alpha)}+c_0\| W_{l+\gamma/2} F^{(\alpha)} \|^2_{H^s} \\
\leq &C\big( \| \mu W_{|\gamma|} F \|_{L^1}+\| \mu W_{|\gamma|} F \|_{L^{\frac{3}{2}}} \big)\|W_{l+\gamma/2} F^{(\alpha)}\|^2_{H^\eta} \nonumber \\
&+ C\| \mu W_{\gamma+2} F \|_{L^1} \| W_{l+\gamma/2} F^{(\alpha)} \|^2_{L^2} \nonumber\\
\leq &C \|W_{l+\gamma/2} F^{(\alpha)}\|^2_{H^\eta} + \| W_{l+\gamma/2} F^{(\alpha)} \|^2_{L^2}\,. \nonumber
\end{align}

    On the other hand, in order to estimate $\Psi_{1,2}^{(0,\alpha)}$ with $0<\gamma +2s<1$ we need the
following commutator estimate:
\begin{lemm}\label{commutator}
There exists a $C>0$ such that
\begin{align}
\Big|\big( W_lQ(g,f)-Q(g,W_l f), h \big)_{L^2} \Big| \leq C
\| g \|_{L^1_{l+\gamma}} \| f \|_{L^2_{l+\gamma}} \| h \|_{L^2}.
\end{align}
\end{lemm}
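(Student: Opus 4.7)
The plan is to begin by noting that $W_l = W_l(v)$ is independent of $v_*$, so the loss terms in the gain/loss decomposition of $Q$ cancel in the commutator and we obtain the clean formula
$$W_l Q(g,f) - Q(g, W_l f) = \iint B(v-v_*,\sigma)\, g'_*\bigl(W_l - W_l'\bigr) f' \, d\sigma\, dv_*.$$
Pairing with $h$ and applying the pointwise weight estimate (Lemma 2.3 of \cite{amuxy-regulariz}) already invoked earlier in this section,
$$\bigl|W_l - W_l'\bigr| \leq C\bigl(\theta\, W_l'\, W_{1,*}' + \theta^l\, W_{l,*}'\bigr),$$
splits the resulting triple integral into two pieces $T_1$ and $T_2$.

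The gain with respect to the original $Q(g,f)$ estimates is the extra factor of $\theta$ in $T_1$ (respectively $\theta^l$ in $T_2$): combined with $b(\cos\theta)$ these yield angular integrals $\int_{\SS^2} b(\cos\theta)\theta\,d\sigma$ and $\int_{\SS^2} b(\cos\theta)\theta^l\,d\sigma$ that are both convergent since $0<s<1/2$ and $l\geq 4 > 2s$. I would then apply Cauchy--Schwarz to each of $T_1,T_2$ in the measure $B\theta\,d\sigma\,dv_*\,dv$ (respectively $B\theta^l\,d\sigma\,dv_*\,dv$), splitting $|g'_*|$ symmetrically as $|g'_*|^{1/2}\cdot|g'_*|^{1/2}$ so that $g$ is only ever controlled in $L^1$. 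One factor retains $f'$ and, after the regular change of variables $v\mapsto v'$ with Jacobian $\cos^2(\theta/2)/4$ at fixed $v_*,\sigma$, becomes a weighted $L^2$-norm of $f$; the other factor retains $h$ and becomes $\|h\|_{L^2}^2$ directly. The residual $v_*$-integral produces $\|g\|_{L^1_{l+\gamma}}$, using $|v-v_*|=|v'-v'_*|$ and elementary weight inequalities to absorb the weights $W_l'$, $W_{1,*}'$, $W_{l,*}'$ together with the factor $|v-v_*|^\gamma$ coming from $B$ into the appropriate norms.

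The main obstacle is the singular regime $-1<\gamma+2s\leq 0$, where $|v-v_*|^\gamma$ is not bounded near $v=v_*$. In this case, mirroring the earlier $\Psi_{2,2}$-estimates, I would write $|v-v_*|^\gamma \leq C\bigl(1 + |v-v_*|^\gamma \mathbf{1}_{|v-v_*|\leq 1}\bigr)$, noting that the indicator part belongs to $L^3(\RR^3)$ thanks to $\gamma+2s > -1$, and combine this with the Sobolev embedding $L^2_{1/2+\varepsilon}\subset L^{3/2}$ so that any $L^{3/2}$ norm that appears in intermediate steps is absorbed into the polynomial weight $W_{l+\gamma}$ on the $f$- and $h$-factors without spoiling the pure $L^1_{l+\gamma}$ control on $g$. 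In the regime $0<\gamma+2s<1$ the bound $|v-v_*|^\gamma \leq \langle v\rangle^\gamma \langle v_*\rangle^\gamma$ is immediate and no such decomposition is needed. Summing $T_1+T_2$ across both cases then yields the commutator bound claimed in the lemma.
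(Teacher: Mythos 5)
Your overall skeleton (the loss terms cancel so only $\iint B\,g'_*(W_l-W'_l)f'\,d\sigma dv_*$ remains, then Cauchy--Schwarz splitting $|g'_*|$ symmetrically and a regular change of variables) does match the paper's proof, but you have replaced its key ingredient by a weaker one, and this creates a genuine gap. The paper does not use the bound $|W_l-W'_l|\leq C(\theta W'_lW'_{1,*}+\theta^l W'_{l,*})$; it uses the Taylor formula, which gives $|W_l-W'_l|\leq C|v-v'|\,W_{l-1}W_{l-1,*}\leq C\theta\,|v-v_*|\,W_{l-1}W_{l-1,*}$. The extra factor $|v-v_*|$ is exactly what neutralizes the kinetic singularity: combined with $\Phi=|v-v_*|^\gamma$ it produces $|v-v_*|^{\gamma+1}$ with $\gamma+1>\gamma+2s>0$ in the regime $0<\gamma+2s<1$ where this lemma is applied, and only then is the product bound $|v-v_*|^{\gamma+1}\leq\langle v\rangle^{\gamma+1}\langle v_*\rangle^{\gamma+1}$ legitimate. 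Your version discards this gain, so the factor $|v-v_*|^\gamma$ survives untouched; and since $\gamma$ may well be negative even when $0<\gamma+2s<1$ (any $\gamma\in(-2s,0)$ is allowed), your claim that ``$|v-v_*|^\gamma\leq\langle v\rangle^\gamma\langle v_*\rangle^\gamma$ is immediate'' is false there --- the left side blows up as $v\to v_*$ while the right side stays bounded. So the case the lemma is actually needed for is not covered by your argument as written.

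Your treatment of the regime $-1<\gamma+2s\leq 0$ is also problematic, on two counts. First, $|v|^{\gamma}\mathbf{1}_{|v|\leq 1}\in L^3(\RR^3)$ requires $\gamma>-1$, whereas the hypothesis only gives $\gamma+2s>-1$, so $\gamma$ may be $\leq -1$ (in the paper's $\Psi_{2,2}$ estimates the exponent appearing in the indicator is $\gamma+2s$, obtained after gaining $|v-v_*|^{2s}$ from $|\mu_*-\mu'_*|$, not $\gamma$ itself). Second, even where the decomposition applies, estimating the near-singular piece forces an $L^{3/2}$-type norm onto one of the three functions; it cannot be ``absorbed into the polynomial weight'' on $f$ and $h$, and if it lands on $g$ it is incompatible with the pure $\|g\|_{L^1_{l+\gamma}}\|f\|_{L^2_{l+\gamma}}\|h\|_{L^2}$ bound asserted in the lemma. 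This is precisely why the paper proves and uses Lemma \ref{commutator} only for $0<\gamma+2s<1$ (its proof explicitly invokes $\gamma+1>\gamma+2s>0$) and switches to Lemma \ref{commutator-2} (Corollary 2.1 of \cite{chenyemin}, with $H^\eta$ norms on $f$ and $h$) when $-1<\gamma+2s\leq 0$. To repair your proof, restrict to $0<\gamma+2s<1$ and take the Taylor-formula route for $W_l-W'_l$; the rest of your Cauchy--Schwarz and change-of-variables argument then goes through as in the paper.
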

\begin{proof}
Firstly the Taylor formula yields
\begin{align*}
|W_l-W'_l|\leq &C |v-v'| W_{l-1}\big(\tau v'+(1-\tau)v \big)
\leq C\sin(\frac{\theta}{2})|v-v_*| (W_{l-1} + W'_{l-1})\\
\leq &C\theta|v-v_*| (W_{l-1} + W_{l-1,*})
\leq C\theta|v-v_*| W_{l-1} W_{l-1,*}\,\,,
\end{align*}
where $\tau \in [0,1]$ and we used Lemma 2.3 in \cite{amuxy-regulariz}. Noticing that $ \gamma +1 > \gamma +2s>0 $, we have
\begin{align*}
&\Big|\big( W_lQ(g,f)-Q(g,W_l f), h \big)_{L^2} \Big| \\
=&\Big| \iiint\! b(\cos \theta) |v-v_*|^\gamma g_* f (W_l-W'_l) h' dv dv_* d\sigma \Big| \\
\leq &C \iiint\! b(\cos \theta) \theta |v-v_*|^{\gamma+1}|(W_{l-1}g)_*|| (W_{l-1} f)|| h'| dv dv_* d\sigma\\
\leq &C \iiint\! b(\cos \theta) \theta |(W_{l+\gamma}g)_*||(W_{l+\gamma} f)|| h'| dv dv_* d\sigma\\
\leq &C \big( \iiint\! b(\cos \theta) \theta |(W_{l+\gamma}g)_*|
                                               |(W_{l+\gamma} f)|^2 dv dv_* d\sigma\big)^{1/2}\\
 &\times \big( \iiint\! b(\cos \theta) \theta |(W_{l+\gamma}g)_*|| h'|^2 dv dv_* d\sigma \big)^{1/2}\\
\leq &C \| g \|_{L^1_{l+\gamma}} \| f \|_{L^2_{l+\gamma}} \| h \|_{L^2}.
\end{align*}
\end{proof}
Now set $g=\mu F$, $f=F^{(\alpha)}$, and $h=W_l F^{(\alpha)}$. Then in the case of $0<\gamma+2s<1$ we have
\begin{align}\label{commu-1}
|\Psi_{1,2}^{(0,\alpha)}|
&\leq C\| \mu W_{l+\gamma} F \|_{L^1} \|W_{l+\gamma} F^{(\alpha)} \|_{L^2}\|W_l F^{(\alpha)} \|_{L^2}\\
&\leq C\|W_{l+\gamma} F^{(\alpha)} \|_{L^2}\|W_l F^{(\alpha)} \|_{L^2}\,.\nonumber
\end{align}

    In the case of $-1<\gamma+2s \leq 0$, we recall Corollary 2.1 in \cite{chenyemin}:\\[-1.5em]
\begin{lemm}\label{commutator-2}
Let $N_1=|N_2|+|N_3|+\max{\{|m-2|,|m-1|\}}$ and $\widetilde{N_1}=N_2+N_3$ with $N_2,\,N_3,\,m \in \RR$. Then if $\widetilde{N_1}\geq m+\gamma $ and $0<s<1/2$, one has
\begin{align}
\Big|\big( W_m Q(g,f)-Q(g,W_m f), h \big)_{L^2} \Big| \leq C
\| g \|_{L^1_{N_1}} \| f \|_{H^\eta_{N_2}} \| h \|_{H^\eta_{N_3}}.
\end{align}
where $\eta <s$.
\end{lemm}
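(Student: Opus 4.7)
The plan is to adapt the argument of Lemma \ref{commutator} to the soft-potential regime $-1<\gamma+2s\leq 0$ and to the more flexible weight split $(N_2,N_3)$. Starting from the identity
\[
\bigl(W_m Q(g,f)-Q(g,W_m f),h\bigr)_{L^2}=\iiint B(v-v_*,\sigma)\,g'_*f'(W_m-W'_m)\,h\,dv\,dv_*\,d\sigma,
\]
I would apply a Taylor-type estimate for the weight difference, as in Lemma 2.3 of \cite{amuxy-regulariz}:
\[
|W_m-W'_m|\leq C\bigl(\theta\,W'_{m-1}W'_{1,*}+\theta^{m-1}W'_{m-1,*}\bigr),
\]
which splits the commutator into two contributions entirely analogous to $G_1$ and $G_2$ treated in Section~3.

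For each contribution, I would further split the kinetic factor as $|v-v_*|^\gamma\leq C\bigl(1+|v-v_*|^\gamma\mathbf{1}_{|v-v_*|\leq 1}\bigr)$, exactly as the paper does for $G_1$ in the soft regime. The non-singular piece (the constant $1$) is handled by Cauchy--Schwarz combined with a pre/post-collisional change of variables, yielding $L^2$ norms of $f$ and $h$ --- essentially a replay of the proof of Lemma \ref{commutator}. The condition $\widetilde{N_1}=N_2+N_3\geq m+\gamma$ is invoked precisely to redistribute the weight $W_{m-1+\gamma}$ (together with the $W_{m-2}$-type factor coming from the $\theta^{m-1}$ term) across the $f$- and $h$-factors, while $N_1=|N_2|+|N_3|+\max\{|m-2|,|m-1|\}$ collects every remaining weight shift from the Taylor step and the redistribution into the $g$-factor.

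The main new work concerns the singular piece $\chi(v):=|v|^\gamma\mathbf{1}_{|v|\leq 1}$. Since $\gamma+2s>-1$ in dimension three, $\chi\in L^p(\RR^3)$ for a range of exponents $p>1$, which is exactly the observation the paper uses in its soft-potential treatment of $G_1$. I would pair $\chi$ with $f$ (or $h$) via H\"older in $v$, and then pay the gap between $L^2$ and the required $L^q$ using the Sobolev embedding $H^\eta\hookrightarrow L^q$ for some $\eta<s$ depending on $\gamma$ and $s$. This is what forces the fractional norms $H^\eta_{N_2}$ and $H^\eta_{N_3}$ to appear on the right-hand side, while the $g$-factor remains in $L^1_{N_1}$ thanks to the uniform $v_*$-integration.

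The main obstacle will be the simultaneous choice of $\eta<s$ and the pairing exponents so that (i) $\chi\in L^p$ for the chosen $p$, (ii) the Lebesgue exponents satisfy $1/p+2/q=1$, (iii) the Sobolev loss remains strictly below $s$, and (iv) the angular kernel $b(\cos\theta)\theta^{1+\cdots}$ remains integrable on $\SS^2$. Because $s<1/2$ and $\gamma+2s>-1$, these constraints are jointly feasible, but the book-keeping is the heart of the proof, and is precisely what distinguishes this estimate from the simpler hard-potential commutator Lemma \ref{commutator}.
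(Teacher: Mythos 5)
First, a point of comparison: the paper does not prove this lemma at all --- it is quoted verbatim as Corollary 2.1 of \cite{chenyemin}, so there is no in-paper argument to measure your sketch against; what matters is whether your outline would actually close, and as written it does not. The critical step is your treatment of the local singularity. By invoking the weight-difference bound in the form $|W_m-W'_m|\leq C\bigl(\theta\,W'_{m-1}W'_{1,*}+\theta^{m-1}W'_{m-1,*}\bigr)$ you have already traded the factor $|v-v'|=|v-v_*|\sin(\theta/2)$ for weights in $\la v'\ra,\la v'_*\ra$, so the kinetic factor entering your H\"older--Sobolev step is still $|v-v_*|^{\gamma}$, i.e. $\chi(v)=|v|^{\gamma}\mathbf{1}_{|v|\leq 1}$. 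With $g$ kept only in $L^1_{N_1}$, the singular piece must be paid for entirely by integrability of $f$ and $h$: one needs $\chi\in L^{p}$ (hence $1/p>|\gamma|/3$) and Sobolev losses $\eta_1+\eta_2=3/p$ distributed on $f$ and $h$, so $\eta_1+\eta_2>|\gamma|$. But in the regime where this lemma is actually needed, $-1<\gamma+2s\leq 0$ forces $|\gamma|\geq 2s$, so at least one of the two indices must be $\geq s$, contradicting the statement's requirement $\eta<s$. Your claim in the final paragraph that the constraints are ``jointly feasible because $s<1/2$ and $\gamma+2s>-1$'' is therefore false as the argument stands: the feasibility computation is exactly the heart of the matter, and it fails.

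The repair is to keep the Taylor gain as the paper itself does in the proof of Lemma \ref{commutator}: $|W_m-W'_m|\leq C\,|v-v_*|\sin(\theta/2)\,(\textrm{weights})$, so that the effective kinetic exponent becomes $\gamma+1$. Then either $\gamma+1\geq 0$ (no local singularity) or $|\gamma+1|<2s$ (since $\gamma>-1-2s$), and your H\"older--Sobolev bookkeeping does close with some $\eta\in(|\gamma+1|/2,s)$, while the angular factor $b(\cos\theta)\theta$ stays integrable because $s<1/2$. This is the same softening mechanism the paper uses elsewhere in the soft regime, where the exponent is improved to $\gamma+2s$ via $|\mu_*-\mu'_*|\leq C\theta^{2s}|v-v_*|^{2s}$ before the $L^p$ integrability of the singularity is exploited. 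Two further caveats: the term $\theta^{m-1}$ in your bound is not integrable against $b(\cos\theta)\,d\sigma$ unless $m>1+2s$, so your route cannot cover the lemma's full generality ($m\in\RR$ arbitrary) without a finer (e.g. dyadic) analysis such as the one in \cite{chenyemin}; and even with the repair you would obtain $L^2$ rather than genuinely needing $H^\eta$ norms on $f$ and $h$ only in the nonsingular case, so the appearance of $\eta$ must be traced precisely to the singular piece, as you correctly anticipated.
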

We remark here the result of Lemma \ref{commutator} is included in Lemma \ref{commutator-2}, but the process will be more simple if we use Lemma \ref{commutator} in the case of $0<\gamma +2s<1$. \\

    In order to estimate $\Psi_{1,2}^{(0,\alpha)}$ in the case of $-1<\gamma+2s \leq 0$, using Lemma
\ref{commutator-2} with $g=\mu F$, $f=F^{(\alpha)}$, and $h=W_l F^{(\alpha)}$, and setting $m=l$, $N_2=l+\gamma/2$, $N_3=\gamma/2$, then $N_1=2l+|\gamma|-1$, and we have
\begin{align}\label{commu-2}
|\Psi_{1,2}^{(0,\alpha)}|
&\leq C \| \mu W_{2l+|\gamma|-1} F \|_{L^1} \|W_{l+\gamma/2} F^{(\alpha)} \|^2_{H^\eta} \\
&\leq C \|W_{l+\gamma/2} F^{(\alpha)} \|^2_{H^\eta}\,.\nonumber
\end{align}
Together with (\ref{coercive-2}) and (\ref{commu-2}) we obtain
\begin{align}
&\Psi_1^{(0,\alpha)}+c_0\| W_{l+\gamma/2} F^{(\alpha)} \|^2_{H^s} \\
\leq &C \|W_{l+\gamma/2} F^{(\alpha)}\|^2_{H^\eta} + \| W_{l+\gamma/2} F^{(\alpha)} \|^2_{L^2} \nonumber\\
\leq &\varepsilon  \|W_{l+\gamma/2} F^{(\alpha)}\|^2_{H^s}
   + C_\varepsilon \| W_{l+\gamma/2} F^{(\alpha)} \|^2_{L^2}\,. \nonumber
\end{align}
Choosing $\varepsilon=c_0/2$, we have
\begin{align}
\Psi_1^{(0,\alpha)}+\frac{c_0}{2}\| W_{l+\gamma/2} F^{(\alpha)} \|^2_{H^s}
&\leq C \| W_{l+\gamma/2} F^{(\alpha)} \|^2_{L^2} \\
&\leq C \|W_{l+\gamma} F^{(\alpha)}\|_{L^2} \| W_l F^{(\alpha)} \|_{L^2} \,.\nonumber
\end{align}
We mention that in the case of $0<\gamma+2s<1$ we will obtain the same estimate on $\Psi_1^{(0,\alpha)}$ as above, by means of (\ref{coercive-1}) and (\ref{commu-1}). \\
Thus we obtain with $-1<\gamma +2s<1$:
\begin{align}\label{Psi_1^0}
&\frac{\rho^{2|\alpha|}\big| \Psi_1^{(0,\alpha)}(t)\big|}{\{(\alpha-r)!\}^{2\nu}}
+c_0 \frac{\rho^{2|\alpha|} \| W_{l+\gamma/2} F^{(\alpha)} \|^2_{H^s}}{\{(\alpha-r)!\}^{2\nu}}\\
\leq &C \|f(t)\|_{\delta-\kappa t,l,\rho,\alpha,r} \|f(t)\|_{\delta-\kappa t,l+1,\rho,\alpha,r}.
       \nonumber
\end{align}

    In order to estimate the term $\Psi_1^{(\alpha',\alpha'')}\,\, (\alpha' \neq 0 )$, we decompose in the
case of $0<\gamma+2s<1$:
\begin{align*}
\Psi_1^{(\alpha',\alpha'')}=&\left( Q(\mu F^{(\alpha')},F^{(\alpha'')}), W_{2l}F^{(\alpha)} \right)\\
=&\left( Q(\mu F^{(\alpha')},W_l F^{(\alpha'')}), W_l F^{(\alpha)} \right)\\
 &+\left( W_l Q(\mu F^{(\alpha')},F^{(\alpha'')})- Q(\mu F^{(\alpha')},W_l F^{(\alpha'')}), W_l F^{(\alpha)} \right)\\
=&\Psi_{1,1}^{(\alpha',\alpha'')}+\Psi_{1,2}^{(\alpha',\alpha'')}\,\,.
\end{align*}

 As for the estimate on $\Psi_{1,2}^{(\alpha',\alpha'')}$, setting $g=\mu F^{(\alpha')}$, $f=F^{(\alpha'')}$, and $h=W_l F^{(\alpha)}$ in the Lemma \ref{commutator}, then we have
\begin{align*}
\left| \Psi_{1,2}^{(\alpha',\alpha'')} \right|
\leq &C \| \mu F^{(\alpha')} \|_{L^1_{l+\gamma}} \| F^{(\alpha'')} \|_{L^2_{l+\gamma}} \| W_l F^{(\alpha)} \|_{L^2} \\
\leq &C \| W_l F^{(\alpha')} \|_{L^2} \|W_{l+1} F^{(\alpha'')} \|_{L^2} \| W_l F^{(\alpha)} \|_{L^2}.
\end{align*}
In order to estimate $\Psi_{1,1}^{(\alpha',\alpha'')}$, we need the following upper bound estimate (see Proposition 3.6 of \cite{smooth effect}):\\[-1.5em]
\begin{lemm}\label{upper bound}
Let $\gamma+2s >0$ and $0<s<1$. For any $\sigma \in [2s-1,2s]$ and $p \in [0, \gamma +2s]$ we have
\begin{align*}
\Big| \Big( Q(f,g),h \Big)_{L^2(\RR^3)} \Big | \leq C
\|f\|_{L^1_{\gamma+2s} }\|g\|_{H^\sigma_{\gamma+2s -p}} \|h\|_{H^{2s-\sigma}_p}\,.
\end{align*}
\end{lemm}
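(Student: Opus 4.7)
The plan is to establish this trilinear Boltzmann estimate by combining the Bobylev Fourier representation with a dyadic decomposition of the angular singularity, following the smoothing-effect literature (Alexandre--Morimoto--Ukai--Xu--Yang).

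First, I would reduce to the unweighted case. Using the collisional weight inequalities $\la v \ra^a\leq C\la v-v_*\ra^{|a|}\la v_*\ra^a$ together with $|v-v'|\leq |v-v_*|\sin(\theta/2)$, the weights $\la v\ra^p$ on $h$ and $\la v\ra^{\gamma+2s-p}$ on $g$ can be commuted into the kernel and distributed onto $h'$ and $g'$; the residual polynomial factors in $v_*$ get absorbed into the $L^1_{\gamma+2s}$ norm of $f$, and the angular cross-section keeps its non-cutoff form. One is therefore reduced to the special case $p=0$ with unweighted $H^\sigma$ and $H^{2s-\sigma}$ norms on $g$ and $h$.

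Second, I would apply Plancherel in $v$ and the Bobylev identity to write
\[
(Q(f,g),h)_{L^2}=\int_{\RR^3}\int_{\SS^2}\!\widetilde B(\xi,\sigma)\bigl[\hat f(\xi^-)\hat g(\xi^+)-\hat f(0)\hat g(\xi)\bigr]\overline{\hat h(\xi)}\,d\sigma\,d\xi,
\]
with $\xi^{\pm}=(\xi\pm|\xi|\sigma)/2$ and $\widetilde B$ the symbol obtained from $b(\cos\theta)|v-v_*|^\gamma$ (the factor $|v-v_*|^\gamma$ contributes a convolution in Fourier controlled by $\|f\|_{L^1_{\gamma+2s}}$). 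Splitting the bracket as $\hat f(\xi^-)[\hat g(\xi^+)-\hat g(\xi)]+[\hat f(\xi^-)-\hat f(0)]\hat g(\xi)$, I would estimate the cancellation piece by the Taylor bound $|\hat g(\xi^+)-\hat g(\xi)|\lesssim |\xi|^\sigma\theta^\sigma(|\hat g(\xi)|+|\hat g(\xi^+)|)$ together with the integrability $\int_{\SS^2}b(\cos\theta)\theta^{2\sigma}\,d\sigma<\infty$ (valid for $\sigma>s$), and then distribute the factor $|\xi|^{2s}=|\xi|^\sigma|\xi|^{2s-\sigma}$ between $g$ and $h$ by Cauchy--Schwarz in $\xi$. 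For the non-cancellation piece, the bound $|\hat f(\xi^-)-\hat f(0)|\lesssim |\xi^-|^{\gamma+2s}\|f\|_{L^1_{\gamma+2s}}$ combined with the angular integrability $\int_{\SS^2}b(\cos\theta)|\xi^-|^{\gamma+2s}\,d\sigma\lesssim |\xi|^{2s}$ closes the argument, and here the hypothesis $\gamma+2s>0$ is used crucially.

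The main obstacle is uniformly covering the full range $\sigma\in[2s-1,2s]$, since the first-order Taylor expansion above only yields an integrable angular singularity when $\sigma>s$. At the lower endpoint $\sigma=2s-1$ one has to use a second-order Taylor expansion together with the symmetry $\sigma\mapsto-\sigma$ on $\SS^2$ (which cancels the linear term) to gain an extra power of $\theta$; the resulting endpoint estimates at $\sigma=2s$ and $\sigma=2s-1$ are then real-interpolated to obtain the full range. Symmetrically, handling the weight index $p$ at its two endpoints $p=0$ and $p=\gamma+2s$ requires parallel commutation arguments in the first reduction step, which must be tracked carefully so that the constant $C$ remains independent of $\sigma$ and $p$.
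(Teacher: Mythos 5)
The paper does not prove this lemma at all: it is quoted verbatim as Proposition 3.6 of the Alexandre--Morimoto--Ukai--Xu--Yang preprint cited as \cite{smooth effect}, and the authors simply invoke it. So there is no proof in the paper against which to compare your argument; what you have written is an independent reconstruction.

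As a reconstruction, your overall architecture is the right one and does follow the AMUXY-school template: reduce the weights, pass to the Bobylev/Plancherel side, split $\hat f(\xi^-)\hat g(\xi^+) -\hat f(0)\hat g(\xi)$ into a cancellation piece on $\hat g$ and a non-cancellation piece on $\hat f$, control the angular singularity via Taylor expansion plus the symmetrization $\sigma\mapsto -\sigma$ at the worst endpoint, and interpolate to cover the whole range of $(\sigma,p)$. Those are exactly the ingredients of the cited proof. However, two places in your sketch are stated as if they were elementary but would not survive as written. First, the pointwise bound
\[
|\hat g(\xi^+)-\hat g(\xi)|\ \lesssim\ |\xi|^\sigma\,\theta^\sigma\bigl(|\hat g(\xi)|+|\hat g(\xi^+)|\bigr)
\]
is false in general: a function in $H^\sigma$ has no pointwise H\"older control on its Fourier transform, and interpolating between the trivial difference bound and a mean-value bound does not produce a right-hand side that depends only on the values of $\hat g$ at the two points. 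The correct implementation is to \emph{not} estimate the difference pointwise but to run a dyadic decomposition in $\theta$ (or, equivalently, in $|\xi^-|$), use the trivial bound on each dyadic shell, and sum using Cauchy--Schwarz in $\xi$, with the geometric sum converging precisely because of the weights $|\xi|^\sigma$ and $|\xi|^{2s-\sigma}$; this is where the restriction $\sigma\geq 2s-1$ actually enters. Second, your phrase ``the factor $|v-v_*|^\gamma$ contributes a convolution in Fourier controlled by $\|f\|_{L^1_{\gamma+2s}}$'' compresses the genuinely delicate step of the argument: Bobylev's identity is clean only for Maxwellian molecules, and for $\gamma\neq 0$ one must either decompose $|v-v_*|^\gamma$ dyadically in $|v-v_*|$ and treat each piece as Maxwellian with a modified constant, or work directly with the singular Fourier symbol of $|\cdot|^\gamma$; either route requires keeping track of how the resulting convolution in $\xi$ interacts with the $|\xi|^{2s}$ gain, and it is here (not merely in the unweighted Taylor step) that the hypothesis $\gamma+2s>0$ and the $L^1_{\gamma+2s}$ norm of $f$ are really used. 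Your plan names the right landmarks, but a referee would ask you to replace the pointwise H\"older inequality with the dyadic Cauchy--Schwarz argument and to spell out the kinetic-factor decomposition before accepting this as a proof rather than a roadmap.
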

Next, we use this lemma with $f=\mu F^{(\alpha')}$, $g=W_l F^{(\alpha'')}$, $h=W_l F^{(\alpha)}$, $p=\gamma+2s$, and $\sigma =2s$. Then by setting $\beta =1-\gamma-2s>0$, we have
\begin{align*}
\left| \Psi_{1,1}^{(\alpha',\alpha'')} \right|
\leq &C \| \mu F^{(\alpha')} \|_{L^1_{\gamma+2s}} \| W_l F^{(\alpha'')} \|_{H^{2s}} \| W_l F^{(\alpha)} \|_{L^2_{\gamma +2s}} \\
\leq &C \| W_l F^{(\alpha')} \|_{L^2} \|W_{l+1} F^{(\alpha'')} \|_{L^2} \| W_{l+1-\beta} F^{(\alpha)} \|_{L^2} \\
& + C \| W_l F^{(\alpha')} \|_{L^2} \|W_l F^{(\alpha''+1)} \|_{L^2} \| W_{l+1} F^{(\alpha)} \|_{L^2} \,,
\end{align*}
in view of $2s<1$ and
$$
\partial_v (W_l \mu^{-1}f^{(\alpha'')})=\partial_v (W_l \mu^{-1}) f^{(\alpha'')}
+ W_l\mu^{-1} f^{(\alpha''+1)}.
$$
Then
\begin{align*}
\left| \Psi_1^{(\alpha',\alpha'')} \right|
\leq &\left| \Psi_{1,1}^{(\alpha',\alpha'')} \right|+\left| \Psi_{1,2}^{(\alpha',\alpha'')} \right| \\
\leq &C\| W_l F^{(\alpha')} \|_{L^2} \|W_{l+1} F^{(\alpha'')} \|_{L^2} \| W_{l+1-\beta} F^{(\alpha)} \|_{L^2} \\
+ &C \| W_l F^{(\alpha')} \|_{L^2} \|W_l F^{(\alpha''+1)} \|_{L^2} \| W_{l+1} F^{(\alpha)} \|_{L^2} \,.
\end{align*}

    On the other hand, if $-1<\gamma+2s \leq 0$, we will estimate $\Psi_{1,1}^{(\alpha',\alpha'')}$ by
using the following result:
\begin{lemm}\label{upper bound-2}
Let $0<s<1$ and $-1<\gamma+2s \leq 0$. For any $p\in \RR$ and $m\in [s-1,s]$, there exists a $C>0$ such that
\begin{align*}
\Big| \Big( Q(f,g),h \Big)_{L^2(\RR^3)} \Big |
\leq C \big( \|f\|_{L^1_{p^++(\gamma+2s)^+}} + \|f\|_{L^{\frac{3}{2}}} \big)
\|g\|_{H^{\max{\{s+m,(2s-1+\varepsilon)^+\}}}_{(p+\gamma+2s)^+}} \|h\|_{H^{s-m}_{-p}}\,.
\end{align*}
\end{lemm}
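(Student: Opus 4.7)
The plan is to establish Lemma \ref{upper bound-2} by adapting the pseudo-differential proof of Lemma \ref{upper bound} (Proposition 3.6 of \cite{smooth effect}) to the soft-potential regime $-1<\gamma+2s\leq 0$, where the kinetic factor $|v-v_*|^\gamma$ becomes locally unbounded. The essential new step is a splitting of the kinetic factor
\begin{equation*}
|v-v_*|^\gamma = |v-v_*|^\gamma \mathbf{1}_{|v-v_*|>1} + |v-v_*|^\gamma \mathbf{1}_{|v-v_*|\leq 1},
\end{equation*}
which induces a decomposition $(Q(f,g),h)_{L^2}=I_{\mathrm{reg}}+I_{\mathrm{sing}}$.

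For the regular piece $I_{\mathrm{reg}}$ I would use the pointwise bound $|v-v_*|^\gamma\leq \la v-v_*\ra^\gamma\leq C\la v\ra^{|\gamma|}\la v_*\ra^{\gamma^+}$ to reduce the analysis to the modified-kinetic-factor version already handled in Lemma \ref{upper bound}. Taking the free parameter there to be $\sigma=s+m$ produces the Sobolev orders $s+m$ on $g$ and $s-m$ on $h$, together with the $L^1$ norm $\|f\|_{L^1_{p^++(\gamma+2s)^+}}$. The standard post-collisional weight redistribution $\la v\ra^\alpha\leq C\la v'\ra^\alpha\la v'_*\ra^{|\alpha|}$ then lands the kinetic weights as $(p+\gamma+2s)^+$ on $g$ and $-p$ on $h$, which is exactly the regular part of the claimed estimate.

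For the singular piece $I_{\mathrm{sing}}$ the strategy is to first perform a Bobylev-type angular manipulation that trades $b(\cos\theta)\theta^{2s}$ for fractional derivatives of order $s+m$ on $g$ and $s-m$ on $h$; this leaves behind a kinetic convolution kernel $\chi(v-v_*):=|v-v_*|^{\gamma+2s}\mathbf{1}_{|v-v_*|\leq 1}$. Since $\gamma+2s\in(-1,0]$, one has $\chi\in L^q(\RR^3)$ for every $q<3/|\gamma+2s|$, and in particular $\chi\in L^3$. Combining H\"{o}lder's inequality with Young's convolution inequality in the exponent pattern $(3,3/2,2,2)$ then yields
\begin{equation*}
\iint \chi(v-v_*)\,|f_*|\,|g|\,|h|\,dv\,dv_*\leq C\,\|f\|_{L^{3/2}}\,\|g\|_{L^2}\,\|h\|_{L^2},
\end{equation*}
which is precisely where the $\|f\|_{L^{3/2}}$ contribution in the statement originates; reinstating the fractional derivatives produced by the angular trade and a further weight redistribution then promote the $L^2$ norms to the weighted Sobolev norms appearing in the lemma.

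The main obstacle will be the symbol-calculus bookkeeping in the singular region. One must verify, for every admissible $m\in[s-1,s]$, that the decomposition $\theta^{2s}\leadsto \theta^{s+m}\cdot\theta^{s-m}$ is legitimate at the level of the Bobylev symbol associated with the truncated kernel $b(\cos\theta)|v-v_*|^\gamma\mathbf{1}_{|v-v_*|\leq 1}$, and that near the endpoint $m=s-1$ (where $s+m$ falls below the critical threshold $2s-1$) the regularity demand on $g$ can legitimately be relaxed to $(2s-1+\varepsilon)^+$; this endpoint substitution is exactly why $\max\{s+m,(2s-1+\varepsilon)^+\}$ appears in the statement. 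Once the symbol decomposition is carried through, adding $I_{\mathrm{reg}}$ and $I_{\mathrm{sing}}$ and observing that $\|f\|_{L^1_{p^++(\gamma+2s)^+}}$ controls the former while $\|f\|_{L^{3/2}}$ controls the latter produces the stated combined norm on $f$ and completes the proof.
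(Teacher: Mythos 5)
The paper does not prove this lemma at all: immediately after the statement it writes ``For the proof of the lemma we refer to Proposition 2.1 and Proposition 2.9 of \cite{amuxy4-3}.'' So there is no in-paper argument to compare against, only a citation.

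Your blind attempt does reproduce the expected strategy for the soft-potential range $-1<\gamma+2s\le 0$: split $|v-v_*|^\gamma$ at $|v-v_*|=1$, treat the far-field piece like a Maxwellian/hard-potential kernel, and identify the near-field piece as the source of the $\|f\|_{L^{3/2}}$ contribution, using that $|v|^{\gamma+2s}\mathbf{1}_{|v|\le 1}\in L^3(\RR^3)$ and then H\"older--Young in the pattern $(3,3/2,2,2)$. That part of the story is correct and is the right intuition for why the $L^{3/2}$ norm appears only when $\gamma+2s\le 0$.

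However, what you have is a plan, not a proof, and you flag the hole yourself. The step you call ``the main obstacle'' --- the Bobylev-type decomposition that is supposed to trade $b(\cos\theta)$ for fractional derivatives of orders $s+m$ on $g$ and $s-m$ on $h$ while producing the truncated kernel $|v-v_*|^{\gamma+2s}\mathbf{1}_{|v-v_*|\le 1}$ in place of $|v-v_*|^\gamma$ --- is exactly the technical content of Propositions 2.1 and 2.9 of \cite{amuxy4-3}, and it is not carried out. In particular, the exponent $\max\{s+m,(2s-1+\varepsilon)^+\}$ on $g$ is asserted rather than derived; you say ``one must verify'' the endpoint behaviour near $m=s-1$ but give no argument for why the regularity demand on $g$ becomes $(2s-1+\varepsilon)^+$ there. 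That exponent is the one delicate piece of the statement, so a proposal that assumes it cannot count as a proof.

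There is also a gap in the treatment of the regular piece $I_{\mathrm{reg}}$. You propose to bound the far-field kinetic factor pointwise by $\la v\ra^{|\gamma|}\la v_*\ra^{\gamma^+}$ and then invoke Lemma \ref{upper bound}. But after that pointwise bound the kernel is no longer a function of $v-v_*$ alone, and Lemma \ref{upper bound} is stated for the genuine Boltzmann kernel $|v-v_*|^\gamma b(\cos\theta)$ under the hypothesis $\gamma+2s>0$, which you do not have. The standard route is to keep the truncated kernel $\Phi_c(v-v_*)=|v-v_*|^\gamma\mathbf{1}_{|v-v_*|>1}$ intact (so that translation invariance, and hence the Bobylev machinery, survives) and redo the hard-potential argument with $\Phi_c$ in place of $|v-v_*|^\gamma$; a pointwise majorisation by separated weights is not a substitute. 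So both halves of your decomposition still require the estimates from \cite{amuxy4-3} that the paper is relying on.
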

For the proof of the lemma we refer to Proposition 2.1 and Proposition 2.9 of \cite{amuxy4-3}. We use it with $f=\mu F^{(\alpha')}$, $g=F^{(\alpha'')}$, $h=W_{2l} F^{(\alpha)}$, $p=l-\gamma-2s$, and $m=s$. By setting $\beta =1-\gamma-2s>0$, we obtain
\begin{align*}
\left| \Psi_1^{(\alpha',\alpha'')} \right|
\leq &C
\big(\| \mu F^{(\alpha')} \|_{L^1_{l-\gamma -2s+(\gamma+2s)^+}} + \| \mu F^{(\alpha')} \|_{L^\frac{3}{2}} \big)
\| W_l F^{(\alpha'')} \|_{H^{2s}}\\
&\times \| W_{l+1-\beta} F^{(\alpha)} \|_{L^2_{\gamma +2s}}\\
\leq &C\| W_l F^{(\alpha')} \|_{L^2} \|W_{l+1} F^{(\alpha'')} \|_{L^2} \| W_{l+1-\beta} F^{(\alpha)} \|_{L^2} \\
& + C \| W_l F^{(\alpha')} \|_{L^2} \|W_l F^{(\alpha''+1)} \|_{L^2} \| W_{l+1} F^{(\alpha)} \|_{L^2} \,.
\end{align*}
Notice that we have same estimate on $ \Psi_1^{(\alpha',\alpha'')} $ in the two cases of $0<\gamma+2s<1$ and $-1<\gamma+2s \leq 0$. \\
Since the H\"{o}lder inequality yields
$$
\| W_{1-\beta} G\|_{L^2}\leq \|G\|^\beta_{L^2} \|W_1 G\|^{1-\beta}_{L^2},
$$
we have
\begin{align}
\left| \Psi_1^{(\alpha',\alpha'')}(t) \right|
\leq &C\|W_l F^{(\alpha')}\|_{L^2} \|W_{l+1} F^{(\alpha'')}\|_{L^2} \|W_{l+1} F^{(\alpha)}\|^{1-\beta}_{L^2} \|W_l F^{(\alpha)}\|^\beta_{L^2} \\
    &+ C\|W_l F^{(\alpha')}\|_{L^2} \|W_l F^{(\alpha''+1)}\|_{L^2}\|W_{l+1} F^{(\alpha)}\|_{L^2}
    \nonumber\\
=&J_1^{(\alpha',\alpha'')}(t)+J_2^{(\alpha',\alpha'')}(t). \nonumber
\end{align}

    Thus under the assumption $-1<\gamma+2s<1$ we obtain the estimate for $\Psi_1^{(\alpha',\alpha'')}(t) $ with $\alpha' \neq 0$ as follows:
\begin{align}\label{J-1 estimate}
&\frac{\rho^{2|\alpha|}\big| J_1^{(\alpha',\alpha'')}(t) \big|}{\{(\alpha-r)!\}^{2\nu}}
\leq  C \frac{\{(\alpha'-r)!\}^\nu\{(\alpha''-r)!\}^\nu}{\{(\alpha-r)!\}^\nu}\\
&\times \|f(t)\|_{\delta-\kappa t,l,\rho,\alpha',r} \|f(t)\|^\beta_{\delta-\kappa t,l,\rho,\alpha,r}
        \|f(t)\|_{\delta-\kappa t,l+1,\rho,\alpha'',r}
        \|f(t)\|^{1-\beta}_{\delta-\kappa t,l+1,\rho,\alpha,r}\,,
        \nonumber
\end{align}
and
\begin{align}
\frac{\rho^{2|\alpha|}\big| J_2^{(\alpha',\alpha'')}(t) \big|}{\{(\alpha-r)!\}^{2\nu}}
&\leq  C \frac{\{(\alpha'-r)!\}^\nu\{(\alpha''+1-r)!\}^\nu}{\{(\alpha-r)!\}^\nu}\\
&\times \|f(t)\|_{\delta-\kappa t,l,\rho,\alpha',r} \|f(t)\|_{\delta-\kappa t,l,\rho,\alpha''+1,r}
        \|f(t)\|_{\delta-\kappa t,l+1,\rho,\alpha,r}\,.
        \nonumber
\end{align}

    Now we have completed the estimates for $\Psi_1^{(0,\alpha)}(t)$, $\Psi_1^{(\alpha',\alpha'')}(t)$,
and $\Psi_2^{(\alpha',\alpha'')}(t)$. The rest proof of Lemma \ref{Main Lemma} is similar as in \cite{mu}. We give a brief outline here for the self-containedness.

First we refer to Proposition 3.1 of \cite{mu}:\\[-2em]
\begin{prop}
If $\nu \geq 1$ and $2\leq r \in \NN$ then there exists a constant $B>0$ depending only on $n$ and $r$ such that for any $\alpha \in \ZZ^n$
\begin{align}
\sum_{\alpha=\alpha'+\alpha''} \frac{\alpha!}{\alpha'!\alpha''!}
      \frac{\{(\alpha'-r)!\}^\nu\{(\alpha''-r)!\}^\nu}{\{(\alpha-r)!\}^\nu} \leq B.
\end{align}
Furthermore, if $\nu>1$ and $r>1+\nu/(\nu-1)$ then there exists a constant $B'>0$ depending only on $n$, $\nu$ and $r$ such that for any $0 \neq \alpha \in \ZZ^n$
\begin{align}
\sum_{\alpha=\alpha'+\alpha'',\,\alpha' \neq 0} \frac{\alpha!}{\alpha'!\alpha''!}
      \frac{\{(\alpha'-r)!\}^\nu\{(\alpha''+1-r)!\}^\nu}{\{(\alpha-r)!\}^\nu} \leq B'.
\end{align}
\end{prop}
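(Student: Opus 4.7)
The plan is to reduce both inequalities to their one-dimensional analogues via separability and then to estimate with elementary manipulations of factorials. The multinomial $\alpha!/(\alpha'!\alpha''!)$ and the weight $\{(\alpha-r)!\}^\nu = \prod_i \{(\alpha_i-r)!\}^\nu$ both factor coordinatewise, so the sum over $\alpha=\alpha'+\alpha''$ splits as an $n$-fold product of 1D sums $\sum_{j+k=\alpha_i}$. Consequently, the first inequality reduces to the $n$-th power of its 1D analogue. For the second inequality, the shift $\alpha''+1$ is in one fixed unit direction $e_i$, so that coordinate yields the 1D analogue of the second inequality while the remaining $n-1$ coordinates contribute the 1D analogue of the first; the restriction $\alpha'\ne 0$ is harmless after the factorization since the would-be term $\alpha'=0$ vanishes under the natural convention on $(j-r)!$ for $j<r$.

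In one dimension, write $m=\alpha$, $j=\alpha'$, $k=\alpha''$ with $j+k=m$. The basic identity
\[
\binom{m}{j}\,\frac{(j-r)!(k-r)!}{(m-r)!}=\frac{m(m-1)\cdots(m-r+1)}{j(j-1)\cdots(j-r+1)\cdot k(k-1)\cdots(k-r+1)},
\]
combined with $j(j-1)\cdots(j-r+1)\geq (j/r)^r$ for $j\geq r$, bounds the right-hand side by $C_r m^r/(jk)^r$. Rewriting the first-inequality summand as $\binom{m}{j}^{1-\nu}\,\bigl[\binom{m}{j}(j-r)!(k-r)!/(m-r)!\bigr]^\nu$ and using $\binom{m}{j}\geq 1$ (so $\binom{m}{j}^{1-\nu}\leq 1$ for $\nu\geq 1$), the summand is bounded by $C_r^\nu m^{r\nu}/(jk)^{r\nu}$. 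Splitting at $j=m/2$ reduces the sum to $\sum 1/j^{r\nu}$, convergent for $r\geq 2$, $\nu\geq 1$; finitely many boundary terms with $j<r$ or $k<r$ contribute $O(1)$.

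For the second inequality in 1D, the identity $(k+1-r)!=(k+1-r)(k-r)!$ turns the summand into
\[
\binom{m}{j}^{1-\nu}\,(k+1-r)^\nu\,\left[\frac{m(m-1)\cdots(m-r+1)}{j(j-1)\cdots(j-r+1)\,k(k-1)\cdots(k-r+1)}\right]^\nu.
\]
A naive application of the previous bound loses a factor of $m^\nu$; the remedy is to retain $\binom{m}{j}^{1-\nu}$. Using $\binom{m}{j}\geq (m-j+1)^j/j!\geq (m/2)^j/j!$ for $j\leq m/2$, one obtains $\binom{m}{j}^{1-\nu}\leq (j!)^{\nu-1}(2/m)^{j(\nu-1)}$, so the summand in that range is dominated by $C_{r,\nu}\,m^{\nu-j(\nu-1)}(2j/e)^{j(\nu-1)}/j^{r\nu}$. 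The hypothesis $r>1+\nu/(\nu-1)$ (equivalent to $\nu>r/(r-1)$) makes $\nu-j(\nu-1)\leq 1-\nu<0$ for every $j\geq r$, while the consecutive-term ratio is $\lesssim (2j/(em))^{\nu-1}\leq e^{-(\nu-1)}<1$, yielding geometric decay dominated by the $j=r$ term of size $O(m^{1-\nu})\to 0$. The symmetric range $j\geq m/2$ is easier: $\binom{m}{j}^{1-\nu}\leq 1$ already suffices and reduces the estimate to $\sum_k 1/k^{(r-1)\nu}$, summable since $(r-1)\nu>1$ under the hypothesis.

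The main obstacle lies in the borderline term $j=r$ in the second inequality, where $\binom{m-r}{0}=1$ gives no cancellation and the summand scales as $m^{r-(r-1)\nu}$. The condition $r>1+\nu/(\nu-1)$ is precisely sharp, simultaneously making this exponent negative and securing the geometric decay of the tail in $j$; absent this hypothesis the $j$-series fails to be uniformly bounded in $m$. Everything else is bookkeeping.
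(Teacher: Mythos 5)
Your treatment of the first inequality is essentially fine: the coordinatewise factorization, the identity
$\binom{m}{j}\,(j-r)!(k-r)!/(m-r)!=\frac{m(m-1)\cdots(m-r+1)}{j\cdots(j-r+1)\,k\cdots(k-r+1)}$, and the split at $j=m/2$ do give a uniform bound (note only that your bound $C_r^{\nu}m^{r\nu}/(jk)^{r\nu}$ yields a constant depending on $\nu$, while the statement claims dependence on $n,r$ alone; this is repaired by observing $(j-r)!(k-r)!/(m-r)!\leq 1$, so the case $\nu=1$ dominates). Since the paper gives no proof of this proposition (it is cited from Proposition 3.1 of Morimoto--Ukai), the only issue is correctness, and the problem lies entirely in your second inequality.

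There the argument has a genuine gap, in fact two. First, the restriction $\alpha'\neq 0$ is a condition on the whole multi-index and does not factor coordinatewise; your assertion that the would-be term $\alpha'=0$ ``vanishes under the natural convention on $(j-r)!$ for $j<r$'' contradicts the convention you use elsewhere. If $(j-r)!=1$ for $j\le r$ (the only convention compatible with the paper's norms $\|f\|_{\delta,l,\rho,\alpha,r}$ for multi-indices with small components, and with your own claim that the boundary terms $j<r$ of the first sum are $O(1)$ rather than zero), then in the coordinate $i_0$ carrying the shift the contribution of $\alpha'_{i_0}=0$ is $(\alpha_{i_0}+1-r)^{\nu}$, which is unbounded; and since $\alpha'\neq 0$ allows $\alpha'_{i_0}=0$, your reduction to a 1D sum with $j\ge 1$ in the shifted coordinate is not available. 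Second, even in that 1D sum every manipulation you make (the identity $(k+1-r)!=(k+1-r)(k-r)!$, the bound $j(j-1)\cdots(j-r+1)\ge (j/r)^r$) presupposes $j\ge r$, and the range $1\le j\le r-1$ is never treated; these are not harmless boundary terms here: under the same convention the term $j=1$ equals $m\{(1-r)!\}^{\nu}=m$, so the claimed uniform bound fails precisely at the terms you skipped. (Relatedly, $r>1+\nu/(\nu-1)$ is not equivalent to $\nu>r/(r-1)$: since $1+\nu/(\nu-1)=2+1/(\nu-1)$, the hypothesis reads $(r-2)(\nu-1)>1$, which is exactly what governs the term $j=r-1$, another sign that the delicate range is $j\le r-1$, not $j=r$.) So the ``main obstacle'' is not the borderline term $j=r$ but the small-$\alpha'$ terms together with the cross-coordinate terms created by the shift; a correct proof must first fix the convention for $(\alpha-r)!$ on components below $r$ and then handle these terms explicitly, and with the set-up as you have written it they defeat the bound, so the argument cannot be closed in its present form.
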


    Now we consider the second term of the right-hand side of (\ref{EQ2}). We consider firstly
\begin{align}
\int^t_0\! &\frac{\rho^{2|\alpha|} \mathcal{K}^\alpha(\tau)}{\{(\alpha-r)!\}^{2\nu}}d\tau
\leq C\sum \frac{\alpha!}{\alpha'!\alpha''!}
      \frac{\{(\alpha'-r)!\}^\nu\{(\alpha''-r)!\}^\nu}{\{(\alpha-r)!\}^\nu} \\
&\times \int^t_0\! \|f(\tau)\|_{\delta-\kappa \tau,l,\rho,\alpha',r}
                   \|f(\tau)\|_{\delta-\kappa \tau,l,\rho,\alpha'',r}
                   \|f(\tau)\|_{\delta-\kappa \tau,l+1,\rho,\alpha,r} d\tau
        \nonumber\\
=&C\Big( \sum_{\min(|\alpha'|,|\alpha''|)<3r} + \sum_{\min(|\alpha'|,|\alpha''|) \geq 3r} \Big)\nonumber\\
=&\mathcal{M}_1^\alpha + \mathcal{M}_2^\alpha \,. \nonumber
\end{align}
We mention that $ 3r \leq |\alpha| \leq N $. Letting
$$
A=A_{r,n}(f)=\sup_{t\in [0,T]} \max_{|\alpha'|<3r} \|f(t)\|_{\delta-\kappa t,l,\rho,\alpha',r}\,\,,
$$
we obtain
\begin{align*}
\mathcal{M}_1^\alpha \leq CB\Big( \frac{A^2}{4 \varepsilon} \int^t_0\! \|f\|^2_{l,\rho,r,N}(\tau)d\tau + \varepsilon \sup_{3r \leq |\alpha| \leq N}
   \int^t_0\! \|f(\tau)\|^2_{\delta-\kappa \tau,l+1,\rho,\alpha,r} d\tau \Big)\,,
\end{align*}
and
\begin{align*}
\mathcal{M}_2^\alpha \leq CB\Big( \frac{1}{4 \varepsilon} \int^t_0\! \|f\|^4_{l,\rho,r,N}(\tau)d\tau + \varepsilon \sup_{3r \leq |\alpha| \leq N}
   \int^t_0\! \|f(\tau)\|^2_{\delta-\kappa \tau,l+1,\rho,\alpha,r} d\tau \Big)\,,
\end{align*}
for any $\varepsilon >0$. Taking a sufficiently smaller $\varepsilon < \kappa$, we have
\begin{align}\label{K estimate}
\int^t_0\! \frac{\rho^{2|\alpha|} \mathcal{K}^\alpha(\tau)}{\{(\alpha-r)!\}^{2\nu}}d\tau
\leq &C_\kappa \int^t_0\! \big(\|f\|^2_{l,\rho,r,N}(\tau) + \|f\|^4_{l,\rho,r,N}(\tau)\big) d\tau\\
&+ \frac{\kappa}{100}\sup_{3r \leq |\alpha| \leq N}
   \int^t_0\! \|f(\tau)\|^2_{\delta-\kappa \tau,l+1,\rho,\alpha,r} d\tau \,. \nonumber
\end{align}

    As for the integral including $\mathcal{J}^\alpha$ in (\ref{EQ2}), we get
\begin{align*}
\int^t_0\! \frac{\rho^{2|\alpha|} \mathcal{J}^\alpha(\tau)}{\{(\alpha-r)!\}^{2\nu}}d\tau
\leq & \int^t_0\! \sum_{\alpha=\alpha'+\alpha''} \frac{\alpha!}{\alpha'!\alpha''!} \frac{\rho^{2|\alpha|}\big| J_1^{(\alpha',\alpha'')}(t) \big|}{\{(\alpha-r)!\}^{2\nu}}\\
&+ \int^t_0\! \sum_{\alpha=\alpha'+\alpha'',\,\alpha' \neq 0} \frac{\alpha!}{\alpha'!\alpha''!}\frac{\rho^{2|\alpha|}\big| J_2^{(\alpha',\alpha'')}(t) \big|}{\{(\alpha-r)!\}^{2\nu}}\,.
\end{align*}
We notice that the integral including
$\mathcal{J}_2^{(\alpha',\alpha'')}$ has the same estimate as the estimate (\ref{K estimate}). On the other hand, the last factor of (\ref{J-1 estimate}) is bounded by
\begin{align*}
\varepsilon &\left( \|f(t)\|^2_{\delta-\kappa t,l+1,\rho,\alpha'',r}
                 + \|f(t)\|^2_{\delta-\kappa t,l+1,\rho,\alpha,r} \right)\\
&\qquad + C_\varepsilon \|f(t)\|^{2/\beta}_{\delta-\kappa t,l,\rho,\alpha',r}
                \|f(t)\|^2_{\delta-\kappa t,l,\rho,\alpha,r},
\end{align*}
for any small $\varepsilon>0$. Then the integral including $\mathcal{J}_1^{(\alpha',\alpha'')}$ is bounded by
\begin{align}
C_\kappa &\int^t_0\! \Big(\|f\|^2_{l,\rho,r,N}(\tau) + \|f\|^{2(1+\beta)/\beta}_{l,\rho,r,N}(\tau)\Big) d\tau \\
&+ \frac{\kappa}{100}\sup_{3r \leq |\alpha| \leq N}
   \int^t_0\! \|f(\tau)\|^2_{\delta-\kappa \tau,l+1,\rho,\alpha,r} d\tau \,.\nonumber
\end{align}
So we have
\begin{align}\label{J estimate}
\int^t_0\! \frac{\rho^{2|\alpha|} \mathcal{J}^\alpha(\tau)}{\{(\alpha-r)!\}^{2\nu}}d\tau \leq
&C_\kappa \int^t_0\! \big(\|f\|^2_{l,\rho,r,N}(\tau) + \|f\|^{2(1+\beta)/\beta}_{l,\rho,r,N}(\tau)\big) d\tau\\
&+ \frac{\kappa}{100}\sup_{3r \leq |\alpha| \leq N}
   \int^t_0\! \|f(\tau)\|^2_{\delta-\kappa \tau,l+1,\rho,\alpha,r} d\tau \,, \nonumber
\end{align}
where we used the fact $4<2(1+\beta)/\beta$.
Together with (\ref{EQ2}), (\ref{Psi_1^0}), (\ref{K estimate}) and (\ref{J estimate}) we obtain finally
\begin{align}
&\|f(t)\|^2_{\delta-\kappa t,l,\rho,\alpha,r} + c_0 \int^t_0\! \frac{\rho^{2|\alpha|} \| W_{l+\gamma/2} \mu^{-1} f^{(\alpha)} \|^2_{H^s}}{\{(\alpha-r)!\}^{2\nu}}
+ 2 \kappa \int^t_0\! \|f(\tau)\|^2_{\delta-\kappa \tau,l+1,\rho,\alpha,r} d\tau \\
\leq &\|f(0)\|^2_{\delta,l,\rho,\alpha,r} + C_\kappa \int^t_0\! \Big(\|f\|^2_{l,\rho,r,N}(\tau) + \|f\|^{2(1+\beta)/\beta}_{l,\rho,r,N}(\tau)\Big) d\tau \nonumber\\
& + \frac{\kappa}{100}\sup_{3r \leq |\alpha| \leq N}
   \int^t_0\! \|f(\tau)\|^2_{\delta-\kappa \tau,l+1,\rho,\alpha,r} d\tau \,. \nonumber
\end{align}

    This leads to the desired estimate (\ref{basic inequa}) including the extra second term of the
left-hand side. This completes the proof of Lemma \ref{Main Lemma}.

\bigskip
\noindent\textbf{Acknowledgments} This work was partially
supported by NNSFC (No. 10971235), RFDP (No. 200805580014), NCET
(No. NCET-08-0579) and the key project of Sun Yat-sen University.

\smallskip

\end{document}